\titleformat*{\section}{\bfseries\normalsize}
\titleformat*{\subsection}{\itshape\normalsize}
\titleformat*{\subsubsection}{\itshape\normalsize}
\theoremstyle{definition}\newtheorem{proposition}{Proposition}
\newcolumntype{L}[1]{>{\raggedright\let\newline\\\arraybackslash\hspace{0pt}}m{#1}}
\newcolumntype{C}[1]{>{\centering\let\newline\\\arraybackslash\hspace{0pt}}m{#1}}
\newcolumntype{R}[1]{>{\raggedleft\let\newline\\\arraybackslash\hspace{0pt}}m{#1}}
\newcommand*{\rom}[1]{\romannumeral #1\relax}
\let\OLDthebibliography\thebibliography
\renewcommand\thebibliography[1]{
\OLDthebibliography{#1}
\setlength{\parskip}{0pt}
\setlength{\itemsep}{0pt}
\footnotesize
}
\title{\Large Solution of the $k$-th eigenvalue problem in large-scale electronic structure calculations}
\author[a]{Dongjin Lee\thanks{\texttt{dongjin-i@na.nuap.nagoya-u.ac.jp}}}
\author[b]{Takeo Hoshi}
\author[a]{Tomohiro Sogabe}
\author[a]{Yuto Miyatake}
\author[a]{Shao-Liang Zhang}
\affil[a]{Graduate School of Engineering, Nagoya University, Furo-cho, Chikusa-ku, Nagoya 464-8603, Japan}
\affil[b]{Department of Applied Mathematics and Physics, Tottori University, 4-101 Koyama-Minami, Tottori 680-8550, Japan}
\date{}
\begin{document}
\maketitle%
\begin{abstract}
We consider computing the $k$-th eigenvalue and its corresponding eigenvector of a generalized Hermitian eigenvalue problem of $n\times n$ large sparse matrices.
In electronic structure calculations, several properties of materials, such as those of optoelectronic device materials, are governed by the eigenpair with a material-specific index $k.$
We present a three-stage algorithm for computing the $k$-th eigenpair with validation of its index.
In the first stage of the algorithm, we propose an efficient way of finding an interval containing the $k$-th eigenvalue $(1 \ll k \ll n)$ with a non-standard application of the Lanczos method.
In the second stage, spectral bisection for large-scale problems is realized using a sparse direct linear solver to narrow down the interval of the $k$-th eigenvalue.
In the third stage, we switch to a modified shift-and-invert Lanczos method to reduce bisection iterations and compute the $k$-th eigenpair with validation.
Numerical results with problem sizes up to 1.5 million are reported, and the results demonstrate the accuracy and efficiency of the three-stage algorithm.
\par
\end{abstract}
\section{Introduction}\label{sec:intro}%
In this paper, we consider the solution of the $k$-th eigenvalue problem with $n \times n$ large sparse Hermitian $A,B$ and positive definite $B$:
\begin{align}\label{eq:kep}
A \boldsymbol{x}_{k} = \lambda_{k} B \boldsymbol{x}_{k},\quad \boldsymbol{x}_{k} \neq \boldsymbol{0},
\end{align}
where $\lambda_{1} \le \cdots \le \lambda_{k} \le \cdots \le \lambda_{n}.$
Here, we assume that the problem-specific target index $k$ satisfies $1 \ll k \ll n$ such that $\lambda_{k}$ is not at either end of $[\lambda_{1},\lambda_{n}].$
\par
The $k$-th eigenvalue problem \eqref{eq:kep} differs from other problems to compute part of the spectrum and the corresponding eigenvectors.
Some eigenvalues at the ends of $[\lambda_{1},\lambda_{n}]$ and their corresponding eigenvectors can be computed by the Lanczos \cite{lanczos1950iteration} and LOBPCG \cite{knyazev2001toward} methods, and some eigenvalues near a given target point and their eigenvectors are computed by the shift-and-invert Lanczos (SI Lanczos) \cite{ericsson1980spectral} and Jacobi--Davidson \cite{sleijpen1996jacobi} methods.
In addition, eigenvalues in a given target interval and their eigenvectors are computed by the Sakurai--Sugiura method \cite{sakurai2003projection}, FEAST method \cite{PhysRevB.79.115112}, and filtering methods \cite{li2016thick}.
However, none of these methods aim at computing the eigenpair of a given target index $k$ with $1 \ll k \ll n.$
\par
The $k$-th eigenvalue problem \eqref{eq:kep} arises from large-scale electronic structure calculations \cite{hoshi2012order}, where eigenvalues correspond to the energy of an electron and eigenvectors represent an electronic wave function.
Here, $\lambda_{k}$ and $\boldsymbol{x}_{k}$ are referred to as the highest occupied (HO) energy and state, respectively.
The target index $k$ is a material-specific value and is approximately 10--50\% of the matrix size \cite{marek2014elpa}, thereby satisfying $1 \ll k \ll n.$
The index of the eigenpair must be validated because several of the physical properties of materials, such as those of optoelectronic device materials, are governed by the eigenpair with the material-specific target index $k$.
Detailed explanation of physical origin and background of the $k$-th eigenvalue problem can be found in \hyperref[sec:appendix]{Appendix}.
\par
Typical electronic structure calculations require the computation of many eigenpairs.
Eigenpairs of practical interest are generally $(\lambda_{1},\boldsymbol{x}_{1})$ through $(\lambda_{k},\boldsymbol{x}_{k}).$
A standard approach is to utilize a dense eigensolver in a massively parallel environment \cite{slug1997,imamura2011development,marek2014elpa}.
Recently, a one-million-dimensional generalized eigenvalue problem was solved by a dense eigensolver \cite{imachi2016hybrid} using the full K computer.
The elapsed time was 5516 seconds to compute all eigenpairs \cite{hoshi2016extremely}, indicating the practical size limit of eigenpair computation by a dense eigensolver.
Therefore, a strong need for methodologies that can be applied to larger materials and matrices has become apparent.
\par
A promising approach to large-scale electronic structure calculations is to construct purpose-specific methods with which one can bypass computation of many eigenpairs to obtain several physical quantities of practical interest.
Several methods have been proposed to calculate the total energy of materials and the force on nuclei in order to realize quantum molecular dynamics simulations without computation of each individual eigenpair \cite{bowler2012methods}.
Such methods are referred to as linear-scaling methods.
\par
Our previous paper \cite{lee2013interior} presented the $k$-th eigenvalue problem \eqref{eq:kep} as a purpose-specific methodology for large-scale calculations that can be considered complementary to linear-scaling methods.
In our previous paper, as a preliminary study of the problem, spectral bisection and its variants were applied to computing the $k$-th eigenvalue and validating its index.
As explained in Section \ref{sec:bp:class:bisect}, the main idea behind bisection is to locate $\lambda_{k}$ based on the number of eigenvalues that are less than a given real number $\sigma,$ denoted $\nu_{\sigma}(A,B).$
\par
Using our previous paper as a foundation, this paper presents a three-stage algorithm for solving the $k$-th eigenvalue problem with the following features.
\begin{enumerate}
\item Efficient initial interval for bisection\\
Setting an initial interval containing $\lambda_{k}$ is necessary to begin bisection.
A standard approach is to utilize some Gershgorin-type theorem so that the interval includes the entire spectrum and thus contains $\lambda_{k}.$
In the first stage of the algorithm, we propose an efficient way of finding a narrow initial interval.
The proposed approach iteratively generates a sequence of disjoint intervals until an interval validated as containing $\lambda_{k}$ is obtained.
For our problem with the target index $1 \ll k \ll n,$ the numerical results show that information conventionally considered useless in the Lanczos method, i.e., Ritz values of the first few steps of the method, is of paramount importance to generate a narrow initial interval.
\item Computation of the $k$-th eigenvector with validation\\
Mistakenly computing another eigenvector, e.g., $\boldsymbol{x}_{k-1}$ or $\boldsymbol{x}_{k+1},$ rather than $\boldsymbol{x}_{k}$ leads to a completely unreliable result because eigenvectors are $B$-orthogonal to each other.
In the third stage, as will be shown in Proposition \ref{prop:bound}, the index is validated by utilizing an eigenvalue error bound that can be evaluated from the residual vector at negligible cost.
\item Application to large-scale problems\\
Each bisection iteration requires computation of $\nu_{\sigma}(A,B),$ which is based on $LDL^{\mathrm{H}}$ factorization of a shifted matrix $A - \sigma B$ and thus requires $O(n^{2})$ memory in general.
In the second stage, $\nu_{\sigma}(A,B)$ is computed by utilizing a sparse direct linear solver to save memory to realize bisection for large-scale problems.
Once a fill-reducing ordering and symbolic factorization are obtained for some $A -\sigma B,$ they can be recycled for other shifted matrices because they depend on only the sparsity structure of a matrix.
\end{enumerate}
\par
The remainder of this paper is organized as follows.
Section \ref{sec:bp} explains the preliminaries of this paper.
In Section \ref{sec:app}, after explaining our approach to set an initial interval and compute the $k$-th eigenvector, we present the three-stage algorithm for the $k$-th eigenvalue problem.
Numerical results of several real research problems and a comparison of the three-stage algorithm and dense eigensolvers are reported in Section \ref{sec:numerical}.
Concluding remarks are given in Section \ref{sec:conclusion}.
\par
Throughout this paper, $A^{\mathrm{T}}$ and $A^{\mathrm{H}}$ denote the transpose and conjugate transpose of matrix $A,$ respectively.
$I$ denotes the identity matrix.
For Hermitian positive definite $B,\ \| \boldsymbol{x} \|_{B}$ denotes the $B$-norm of vector $\boldsymbol{x},$ defined as $\| \boldsymbol{x} \|_{B} = \sqrt{\boldsymbol{x}^{\mathrm{H}} B \boldsymbol{x}}.$
\par
\section{Preliminaries}\label{sec:bp}%
Section \ref{sec:bp:class:bisect} explains spectral bisection for computing the $k$-th eigenvalue.
Sections \ref{sec:bp:pre:lanczos} and \ref{sec:bp:pre:silanczos} provide the preliminaries of the Lanczos and SI Lanczos methods.
\par
\subsection{Spectral bisection for the $k$-th eigenvalue}\label{sec:bp:class:bisect}%
In spectral bisection, $\nu_{\sigma}(A,B)$ is computed to determine whether $\lambda_{k}$ is located to the left or right of $\sigma.$
Specifically, $k \le \nu_{\sigma}(A,B)$ implies that $\lambda_{k} < \sigma,$ while $k > \nu_{\sigma}(A,B)$ implies that $\lambda_{k} \ge \sigma.$
This idea dates back to the work of Givens \cite{givens1954numerical}.
\par
Algorithm \ref{alg:bisect} shows the spectral bisection for the $k$-th eigenvalue, which was employed but not clearly presented in our previous paper \cite{lee2013interior}.
When an interval $[\sigma_{\mathrm{lower}},\sigma_{\mathrm{upper}})$ containing $\lambda_{k}$ is set, the midpoint $\sigma$ of the interval is calculated in line \ref{alg:bisect:root}, and then
$\nu_{\sigma}(A,B)$ is computed by utilizing $LDL^{\mathrm{H}}$ factorization of the shifted matrix $A - \sigma B$ in lines \ref{alg:bisect:ldl}--\ref{alg:bisect:diag}.
Here, $A - \sigma B$ is indefinite; thus, permutation $P$ is necessary for numerically stable factorization \cite{BunchKaufman1977}, and $L$ and $D$ are a unit lower triangular matrix and a block diagonal matrix of block size one or two, respectively.
Due to Sylvester's law of inertia, $\nu_{0}(D,I)$ equals $\nu_{\sigma}(A,B).$
Depending on $\nu_{\sigma}(A,B),$ either the left or right half-interval is selected as the next interval in line \ref{alg:bisect:half}.
$\lceil \log_{2}[(\sigma_{\mathrm{upper}} - \sigma_{\mathrm{lower}})/\tau] \rceil$ iterations are required until the interval becomes narrower than a given tolerance $\tau.$
Thus, a narrower initial interval will result in fewer required iterations to locate $\lambda_{k}.$
\par
\begin{algorithm2e}[H]
\SetNlSty{textrm}{}{}\SetNlSkip{0.625em}\DontPrintSemicolon
\SetKwInOut{Input}{Input}\SetKwInOut{Output}{Output}
\SetKwIF{If}{ElseIf}{Else}{if}{then}{else if}{else}{{end if}}
\SetKwIF{RIf}{RElseIf}{RElse}{repeat until}{}{}{}{{}}
\SetKwComment{tcp}{$\triangleright$~}{}\SetCommentSty{textrm}
\Input{matrices $A,B$ of generalized eigenvalue problem \eqref{eq:kep}, target index $k,$\\
initial interval $[\sigma_{\mathrm{lower}},\sigma_{\mathrm{upper}})$ containing the $k$-th eigenvalue, tolerance $\tau.$}
\Output{approximate eigenvalue $\hat{\lambda}_{k}:=(\sigma_{\mathrm{lower}}+\sigma_{\mathrm{upper}})/2,$ where $|\hat{\lambda}_{k} - \lambda_{k}| < \tau/2.$}
\RIf{$\sigma_{\mathrm{upper}} - \sigma_{\mathrm{lower}} < \tau$}{
	$\sigma:=(\sigma_{\mathrm{lower}}+\sigma_{\mathrm{upper}})/2,$\label{alg:bisect:root}\;
	$LDL^{\mathrm{H}}\leftarrow P(A-\sigma B)P^{\mathrm{T}},$\label{alg:bisect:ldl}\tcp*{\makebox[27em]{$P:$ permutation for numerical stability\hfill}}
	$\nu:=\nu_{0}(D,I),\label{alg:bisect:diag}$\tcp*{\makebox[27em]{$\nu_{0}(D,I):$ number of negative eigenvalues of block diagonal $D$\hfill}}
	\leIf{$k \le \nu$}{$\sigma_{\mathrm{upper}}:=\sigma$}{$\sigma_{\mathrm{lower}}:=\sigma.$}\label{alg:bisect:half}
}
\caption{Spectral bisection for the $k$-th eigenvalue \cite{lee2013interior}\label{alg:bisect}}
\end{algorithm2e}
\par
Rather than bisection, it is possible to apply other root-finding algorithms to line \ref{alg:bisect:root} of Algorithm \ref{alg:bisect} to achieve faster convergence.
By considering eigenvalues as the roots of a characteristic polynomial $\det(A - \sigma B)$, several root-finding algorithms are applied to select point $\sigma$ to compute $\nu_{\sigma}(A,B)$ \cite{peters1969eigenvalues}.
Generally, such variants can be applied only after the interval is narrowed down sufficiently to contain only one eigenvalue \cite[Chapter 3.5]{parlett1998symmetric}.
Our previous paper \cite{lee2013interior} took another perspective and considered $\nu_{\sigma}(A,B)$ as a function of $\sigma,$ which is non-decreasing, integer-valued, and discontinuous at $\sigma = \lambda_{i}$ for $1 \le i \le n.$
This perspective enabled the application of a certain type of root-finding algorithms to select $\sigma$ before $\lambda_{k}$ was isolated from the other eigenvalues by the interval.
To date, a thorough comparison of bisection and its variants has not been made for the $k$-th eigenvalue problem, and, in this paper, we utilize bisection for the sake of stable performance.
\par
\subsection{Lanczos method}\label{sec:bp:pre:lanczos}%
The Lanczos method \cite{lanczos1950iteration} is a projection method in which approximate solutions are constructed within a Krylov subspace and are determined to be optimal in the sense of the Galerkin condition.
The subspace and its orthonormal basis are generated by the Lanczos process, which can be expressed in the matrix form:
\begin{align}\label{eq:lanczos}
A V_{j} = B V_{j} T_{j} + B \boldsymbol{v}_{j+1} \beta_{j} \boldsymbol{e}_{j}^{\mathrm{T}}.
\end{align}
Here, $V_{j}$ is an $n \times j$ matrix whose columns span a Krylov subspace and are $B$-orthonormal.
$T_{j}$ is real symmetric tridiagonal and has non-zero off-diagonal elements (irreducible).
$\boldsymbol{v}_{j+1}$ is $B$-orthogonal to the columns of $V_{j}$ and is normalized with respect to the $B$-norm by scale factor $\beta_{j}$.
$\boldsymbol{e}_{j}$ is the last column of the identity matrix of size $j.$
In this paper, we refer to \eqref{eq:lanczos} as $j$-step Lanczos decomposition.
\par
From the Galerkin condition, the standard eigenvalue problem of $T_{j}$ is derived:
\begin{align}\label{eq:triep}%
T_{j} \boldsymbol{y}_{i}^{(j)} = \theta_{i}^{(j)} \boldsymbol{y}_{i}^{(j)},\quad \boldsymbol{y}_{i}^{(j)} \neq \boldsymbol{0}.
\end{align}%
Since $T_{j}$ is irreducible, eigenvalues $\theta_{i}^{(j)}$ are distinct from each other \cite[Chapter 1.3]{meurant2006lanczos} and can be indexed in increasing order, i.e., $\theta_{1}^{(j)} < \theta_{2}^{(j)} < \cdots < \theta_{j}^{(j)}.$
The Lanczos method can be considered a Rayleigh--Ritz procedure, and eigenvalues $\theta_{i}^{(j)}$ are referred to as Ritz values.
\par
\subsection{Shift-and-invert Lanczos method}\label{sec:bp:pre:silanczos}%
A small number of eigenvalues near a target point (or in a target interval) and their associated eigenvectors can be computed by the SI Lanczos method \cite{ericsson1980spectral}.
The SI Lanczos method applied to the original problem \eqref{eq:kep} can be considered as applying the original Lanczos method (Section \ref{sec:bp:pre:lanczos}) to the SI problem:
\begin{align}\label{eq:sigep}
(A - \sigma B)^{-1} \tilde{\boldsymbol{x}} = \tilde{\lambda} B^{-1} \tilde{\boldsymbol{x}},\quad \tilde{\boldsymbol{x}} \neq \boldsymbol{0}.
\end{align}
Here, it is assumed that shift $\sigma$ does not coincide with an eigenvalue of \eqref{eq:kep}, i.e., $\sigma \neq \lambda_{i}.$
The eigenvalues of the original and SI problems have the relationship $\tilde{\lambda} = (\lambda - \sigma)^{-1},$ while eigenvectors satisfy both $\tilde{\boldsymbol{x}} = B\boldsymbol{x}$ and $\tilde{\boldsymbol{x}} = (A - \sigma B) \boldsymbol{x}$ because $B\boldsymbol{x}$ and $(A - \sigma B) \boldsymbol{x}$ are collinear.
Based on these relationships, approximate eigenpairs for the SI problem \eqref{eq:sigep} are transformed to those for the original problem \eqref{eq:kep}.
\par
The matrix form of $j$-step SI Lanczos decomposition is given as follows:
\begin{align}\label{eq:sigep:lanczos}
(A - \sigma B)^{-1} \tilde{V}_{j} = B^{-1} \tilde{V}_{j} \tilde{T}_{j} + B^{-1} \tilde{\boldsymbol{v}}_{j+1} \tilde{\beta}_{j} \boldsymbol{e}_{j}^{\mathrm{T}}.
\end{align}
Here, $\tilde{V}_{j}$ is an $n \times j$ matrix whose columns span a Krylov subspace and are $B^{-1}$-orthonormal, and $\tilde{T}_{j}$ is real symmetric tridiagonal and irreducible.
$\tilde{\boldsymbol{v}}_{j+1}$ is $B^{-1}$-orthogonal to the columns of $\tilde{V}_{j}$ and is normalized with respect to the $B^{-1}$-norm by scale factor $\tilde{\beta}_{j}$.
\par
From the Galerkin condition, the standard eigenvalue problem of $\tilde{T}_{j}$ is derived:
\begin{align}\label{eq:sigep:triep}
\tilde{T}_{j} \tilde{\boldsymbol{y}}_{i}^{(j)} = \tilde{\theta}_{i}^{(j)} \tilde{\boldsymbol{y}}_{i}^{(j)},\quad \tilde{\boldsymbol{y}}_{i}^{(j)} \neq \boldsymbol{0}.
\end{align}
Here, eigenvalues $\tilde{\theta}_{i}^{(j)}$ are indexed in increasing order, and, in the remainder of this paper, eigenvectors $\tilde{\boldsymbol{y}}_{i}^{(j)}$ are assumed to be normalized with respect to the $2$-norm.
Using the $i$-th eigenpair $(\tilde{\theta}_{i}^{(j)},\tilde{\boldsymbol{y}}_{i}^{(j)})$ of $\tilde{T}_{j},$ the approximate eigenvalues for \eqref{eq:kep} are expressed as follows:
\begin{align}\label{eq:sigep:value}
\lambda_{i}^{(j)} = \sigma+1/\tilde{\theta}_{i}^{(j)}.
\end{align}
Approximate eigenvectors can be expressed in two different ways.
\begin{enumerate}
\item Using the first relationship $\tilde{\boldsymbol{x}} = B\boldsymbol{x},$ approximate eigenvectors are given as:
\begin{align}\label{eq:sigep:vec1}
\boldsymbol{x}_{i,1}^{(j)} = B^{-1} \tilde{V}_{j} \tilde{\boldsymbol{y}}_{i}^{(j)}.
\end{align}
\item From the second relationship $\tilde{\boldsymbol{x}} = (A - \sigma B) \boldsymbol{x},$ we have:
\begin{align}\label{eq:sigep:vec2}
\boldsymbol{x}_{i,2}^{(j)} = (A - \sigma B)^{-1} \tilde{V}_{j} \tilde{\boldsymbol{y}}_{i}^{(j)}.
\end{align}
\end{enumerate}
\par
In this paper, we utilize $\boldsymbol{x}_{i,2}^{(j)}$ in \eqref{eq:sigep:vec2} as an approximate eigenvector, although it is common to use $\boldsymbol{x}_{i,1}^{(j)}$ in \eqref{eq:sigep:vec1}.
This is because, as will be shown in Proposition \ref{prop:bound}, $\boldsymbol{x}_{i,2}^{(j)}$ allows economical evaluation of an eigenvalue error bound that can be utilized to validate the index of approximate eigenpairs.
Other perspectives on approximate eigenvectors and their further treatment can be found in the literature \cite{ericsson1980spectral} and \cite[Chapter 7.6.8]{bai2000templates}, in which $\boldsymbol{x}_{i,2}^{(j)}$ is considered a modification of $\boldsymbol{x}_{i,1}^{(j)}.$
\par
The remainder of this subsection provides theoretical results that support the utilization of $\boldsymbol{x}_{i,2}^{(j)}$ as an approximate eigenvector.
In Proposition \ref{prop:ev}, we show that $\boldsymbol{x}_{i,2}^{(j)}$ converges to the same eigenvector of \eqref{eq:kep} at the same iteration of the SI Lanczos method as $\boldsymbol{x}_{i,1}^{(j)}.$
Then, in Proposition \ref{prop:ortho}, we show that $\boldsymbol{x}_{i,2}^{(j)}$ with $1 \le i \le j$ become $B$-orthogonal to each other as the SI Lanczos method proceeds.
\par
\begin{proposition}\label{prop:ev}
Approximate eigenvectors $\boldsymbol{x}_{i,1}^{(j)}$ in \eqref{eq:sigep:vec1} and $\boldsymbol{x}_{i,2}^{(j)}$ in \eqref{eq:sigep:vec2} are collinear if and only if $\tilde{\boldsymbol{v}}_{j+1}=\boldsymbol{0}.$
\end{proposition}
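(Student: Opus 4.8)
The plan is to collapse both directions of the equivalence onto a single identity derived from the decomposition \eqref{eq:sigep:lanczos}. First I would right-multiply \eqref{eq:sigep:lanczos} by the Ritz vector $\tilde{\boldsymbol{y}}_{i}^{(j)}$ and invoke the eigen-equation \eqref{eq:sigep:triep}, $\tilde{T}_{j}\tilde{\boldsymbol{y}}_{i}^{(j)}=\tilde{\theta}_{i}^{(j)}\tilde{\boldsymbol{y}}_{i}^{(j)}$. Identifying the left-hand side with $\boldsymbol{x}_{i,2}^{(j)}$ through \eqref{eq:sigep:vec2}, the first term on the right with $\tilde{\theta}_{i}^{(j)}\boldsymbol{x}_{i,1}^{(j)}$ through \eqref{eq:sigep:vec1}, and observing that $\boldsymbol{e}_{j}^{\mathrm{T}}\tilde{\boldsymbol{y}}_{i}^{(j)}$ is a scalar, this yields
\begin{align}
\boldsymbol{x}_{i,2}^{(j)}=\tilde{\theta}_{i}^{(j)}\boldsymbol{x}_{i,1}^{(j)}+\bigl(\boldsymbol{e}_{j}^{\mathrm{T}}\tilde{\boldsymbol{y}}_{i}^{(j)}\bigr)B^{-1}\bigl(\tilde{\beta}_{j}\tilde{\boldsymbol{v}}_{j+1}\bigr),
\end{align}
where $\tilde{\beta}_{j}\tilde{\boldsymbol{v}}_{j+1}$ is precisely the residual vector of the SI Lanczos decomposition. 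This identity is the backbone of the proof.

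The ``if'' direction is then immediate: when $\tilde{\boldsymbol{v}}_{j+1}=\boldsymbol{0}$ the residual term vanishes and $\boldsymbol{x}_{i,2}^{(j)}=\tilde{\theta}_{i}^{(j)}\boldsymbol{x}_{i,1}^{(j)}$, so the two vectors are collinear; here $\boldsymbol{x}_{i,1}^{(j)}\neq\boldsymbol{0}$ because $\tilde{V}_{j}$ has $B^{-1}$-orthonormal, hence linearly independent, columns, $\tilde{\boldsymbol{y}}_{i}^{(j)}\neq\boldsymbol{0}$, and $B^{-1}$ is nonsingular. For the converse I would assume $\boldsymbol{x}_{i,2}^{(j)}=c\,\boldsymbol{x}_{i,1}^{(j)}$ for some scalar $c$, subtract this relation from the identity, and then multiply through by $B$ and substitute $\boldsymbol{x}_{i,1}^{(j)}=B^{-1}\tilde{V}_{j}\tilde{\boldsymbol{y}}_{i}^{(j)}$ to reach
\begin{align}
\bigl(c-\tilde{\theta}_{i}^{(j)}\bigr)\tilde{V}_{j}\tilde{\boldsymbol{y}}_{i}^{(j)}=\bigl(\boldsymbol{e}_{j}^{\mathrm{T}}\tilde{\boldsymbol{y}}_{i}^{(j)}\bigr)\tilde{\beta}_{j}\tilde{\boldsymbol{v}}_{j+1}.
\end{align}
Forming the $B^{-1}$-inner product of both sides with the residual vector $\tilde{\beta}_{j}\tilde{\boldsymbol{v}}_{j+1}$ annihilates the left-hand side, since $\tilde{\boldsymbol{v}}_{j+1}$ is $B^{-1}$-orthogonal to the columns of $\tilde{V}_{j}$, and leaves $\bigl(\boldsymbol{e}_{j}^{\mathrm{T}}\tilde{\boldsymbol{y}}_{i}^{(j)}\bigr)\,\tilde{\beta}_{j}^{2}\,\tilde{\boldsymbol{v}}_{j+1}^{\mathrm{H}}B^{-1}\tilde{\boldsymbol{v}}_{j+1}=0$.

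The step I expect to require the most care is ensuring that the scalar $\boldsymbol{e}_{j}^{\mathrm{T}}\tilde{\boldsymbol{y}}_{i}^{(j)}$, the last component of the Ritz vector, is nonzero, for otherwise the residual could not be extracted. This is exactly where the irreducibility of $\tilde{T}_{j}$ (recorded after \eqref{eq:sigep:triep}) enters: for an irreducible symmetric tridiagonal matrix, no eigenvector can have a vanishing last component, because a zero there would propagate up the nonzero subdiagonal and force $\tilde{\boldsymbol{y}}_{i}^{(j)}=\boldsymbol{0}$. With $\boldsymbol{e}_{j}^{\mathrm{T}}\tilde{\boldsymbol{y}}_{i}^{(j)}\neq0$ and $B^{-1}$ Hermitian positive definite, the last equation forces $\tilde{\beta}_{j}^{2}\,\tilde{\boldsymbol{v}}_{j+1}^{\mathrm{H}}B^{-1}\tilde{\boldsymbol{v}}_{j+1}=\|\tilde{\beta}_{j}\tilde{\boldsymbol{v}}_{j+1}\|_{B^{-1}}^{2}=0$, so the residual vector $\tilde{\beta}_{j}\tilde{\boldsymbol{v}}_{j+1}$ vanishes and hence $\tilde{\boldsymbol{v}}_{j+1}=\boldsymbol{0}$, completing the equivalence. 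It is worth noting that working with the grouped residual $\tilde{\beta}_{j}\tilde{\boldsymbol{v}}_{j+1}$ avoids having to argue separately that $\tilde{\beta}_{j}\neq0$, which does not follow from the irreducibility of $\tilde{T}_{j}$ alone.
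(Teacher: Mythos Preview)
Your proposal is correct and follows essentially the same approach as the paper: both derive the key identity $\boldsymbol{x}_{i,2}^{(j)}=\tilde{\theta}_{i}^{(j)}\boldsymbol{x}_{i,1}^{(j)}+B^{-1}\tilde{\boldsymbol{v}}_{j+1}(\tilde{\beta}_{j}\boldsymbol{e}_{j}^{\mathrm{T}}\tilde{\boldsymbol{y}}_{i}^{(j)})$ from \eqref{eq:sigep:lanczos}, use the $B^{-1}$-orthogonality of $\tilde{\boldsymbol{v}}_{j+1}$ to the columns of $\tilde{V}_{j}$ to force the residual term to vanish, and invoke irreducibility of $\tilde{T}_{j}$ to rule out $\boldsymbol{e}_{j}^{\mathrm{T}}\tilde{\boldsymbol{y}}_{i}^{(j)}=0$. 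The only cosmetic difference is that the paper phrases the necessity step as ``the residual term is both collinear with and $B$-orthogonal to $\boldsymbol{x}_{i,1}^{(j)}$, hence zero,'' whereas you multiply through by $B$ and take the $B^{-1}$-inner product with the residual; the underlying computation is the same.
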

\begin{proof}%
We first prove the sufficiency.
By post-multiplying \eqref{eq:sigep:lanczos} by $\tilde{\boldsymbol{y}}_{i}^{(j)}$ and from the sufficient condition $\tilde{\boldsymbol{v}}_{j+1}=\boldsymbol{0},$ we have:
\begin{align*}%
\boldsymbol{x}_{i,2}^{(j)} = (A - \sigma B)^{-1} \tilde{V}_{j} \tilde{\boldsymbol{y}}_{i}^{(j)} = B^{-1} \tilde{V}_{j} \tilde{T}_{j} \tilde{\boldsymbol{y}}_{i}^{(j)} = \tilde{\theta}_{i}^{(j)} \boldsymbol{x}_{i,1}^{(j)}.
\end{align*}
The third equality follows from \eqref{eq:sigep:triep}.
This proves the sufficiency.
Now, we prove the necessity.
By post-multiplying \eqref{eq:sigep:lanczos} by $\tilde{\boldsymbol{y}}_{i}^{(j)},$ we have:
\begin{align*}
\boldsymbol{x}_{i,2}^{(j)} = \tilde{\theta}_{i}^{(j)} \boldsymbol{x}_{i,1}^{(j)} + B^{-1} \tilde{\boldsymbol{v}}_{j+1} (\tilde{\beta}_{j} \boldsymbol{e}_{j}^{\mathrm{T}} \tilde{\boldsymbol{y}}_{i}^{(j)}).
\end{align*}
Here, $\boldsymbol{x}_{i,1}^{(j)}$ and $\boldsymbol{x}_{i,2}^{(j)}$ are collinear from the necessary condition; thus, the last term $B^{-1} \tilde{\boldsymbol{v}}_{j+1} (\tilde{\beta}_{j} \boldsymbol{e}_{j}^{\mathrm{T}} \tilde{\boldsymbol{y}}_{i}^{(j)})$ must be collinear with $\boldsymbol{x}_{i,1}^{(j)}$ and $\boldsymbol{x}_{i,2}^{(j)}.$
In addition, the last term is $B$-orthogonal to $\boldsymbol{x}_{i,1}^{(j)}$ because $\tilde{\boldsymbol{v}}_{j+1}$ is $B^{-1}$-orthogonal to the columns of $\tilde{V}_{j}$:
\begin{align*}
(\boldsymbol{x}_{i,1}^{(j)})^{\mathrm{H}} B \left[B^{-1} \tilde{\boldsymbol{v}}_{j+1} (\tilde{\beta}_{j} \boldsymbol{e}_{j}^{\mathrm{T}} \tilde{\boldsymbol{y}}_{i}^{(j)})\right] = (\tilde{\boldsymbol{y}}_{i}^{(j)})^{\mathrm{H}} \tilde{V}_{j}^{\mathrm{H}} B^{-1} \tilde{\boldsymbol{v}}_{j+1} (\tilde{\beta}_{j} \boldsymbol{e}_{j}^{\mathrm{T}} \tilde{\boldsymbol{y}}_{i}^{(j)}) = 0.
\end{align*}
Due to this $B$-orthogonality and the positive-definiteness of $B,$ the last term can never be collinear with $\boldsymbol{x}_{i,1}^{(j)}$ unless it is the zero vector.
Therefore, $\tilde{\boldsymbol{v}}_{j+1} = \boldsymbol{0}$ (thus, $\tilde{\beta}_{j}=0$) or $\boldsymbol{e}_{j}^{\mathrm{T}} \tilde{\boldsymbol{y}}_{i}^{(j)} = 0.$
However, $\boldsymbol{e}_{j}^{\mathrm{T}} \tilde{\boldsymbol{y}}_{i}^{(j)}$ is non-zero because it is the last element of an eigenvector of an irreducible tridiagonal matrix \cite[Theorem 7.9.3]{parlett1998symmetric}.
This proves the necessity.
\end{proof}
\par
Note that $\boldsymbol{x}_{i,2}^{(j)}$ with $1 \le i \le j$ do not have exact $B$-orthogonality.
Their $B$-orthogonality can be measured by \eqref{eq:sigep:orthonorm} in Proposition \ref{prop:ortho}, which is the cosine similarity of two approximate eigenvectors in the $B$-inner product.
\par
\begin{proposition}\label{prop:ortho}
The following holds for $1 \le l < m \le j$:
\begin{align}\label{eq:sigep:orthonorm}
\frac{| (\boldsymbol{x}_{l,2}^{(j)})^{\mathrm{H}} B \boldsymbol{x}_{m,2}^{(j)} |}{\| \boldsymbol{x}_{l,2}^{(j)} \|_{B} \cdot \| \boldsymbol{x}_{m,2}^{(j)}\|_{B}} = \frac{|\tilde{\beta}_{j}\boldsymbol{e}_{j}^{\mathrm{T}}\tilde{\boldsymbol{y}}_{l}^{(j)}/\tilde{\theta}_{l}^{(j)}|}{\sqrt{1 + |\tilde{\beta}_{j}\boldsymbol{e}_{j}^{\mathrm{T}}\tilde{\boldsymbol{y}}_{l}^{(j)}/\tilde{\theta}_{l}^{(j)}|^{2}}} \cdot \frac{|\tilde{\beta}_{j}\boldsymbol{e}_{j}^{\mathrm{T}}\tilde{\boldsymbol{y}}_{m}^{(j)}/\tilde{\theta}_{m}^{(j)}|}{\sqrt{1 + |\tilde{\beta}_{j}\boldsymbol{e}_{j}^{\mathrm{T}}\tilde{\boldsymbol{y}}_{m}^{(j)}/\tilde{\theta}_{m}^{(j)}|^{2}}}.
\end{align}
\end{proposition}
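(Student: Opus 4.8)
The plan is to recycle the decomposition established in the proof of Proposition~\ref{prop:ev}. Post-multiplying \eqref{eq:sigep:lanczos} by $\tilde{\boldsymbol{y}}_{i}^{(j)}$ yields
\begin{align*}
\boldsymbol{x}_{i,2}^{(j)} = \tilde{\theta}_{i}^{(j)} \boldsymbol{x}_{i,1}^{(j)} + B^{-1} \tilde{\boldsymbol{v}}_{j+1}\,(\tilde{\beta}_{j} \boldsymbol{e}_{j}^{\mathrm{T}} \tilde{\boldsymbol{y}}_{i}^{(j)}),
\end{align*}
so each $\boldsymbol{x}_{i,2}^{(j)}$ splits into a piece along $\boldsymbol{x}_{i,1}^{(j)}$ and a piece along $B^{-1}\tilde{\boldsymbol{v}}_{j+1}$. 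Abbreviating $\gamma_{i}:=\tilde{\beta}_{j}\boldsymbol{e}_{j}^{\mathrm{T}}\tilde{\boldsymbol{y}}_{i}^{(j)}$, I would substitute this into $(\boldsymbol{x}_{l,2}^{(j)})^{\mathrm{H}} B \boldsymbol{x}_{m,2}^{(j)}$ and expand into four terms, keeping in mind that the Ritz values $\tilde{\theta}_{i}^{(j)}$ are real.

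The computation collapses using two orthogonality facts already recorded in the excerpt. First, each cross term carries the factor $(\boldsymbol{x}_{l,1}^{(j)})^{\mathrm{H}} \tilde{\boldsymbol{v}}_{j+1} = (\tilde{\boldsymbol{y}}_{l}^{(j)})^{\mathrm{H}} \tilde{V}_{j}^{\mathrm{H}} B^{-1}\tilde{\boldsymbol{v}}_{j+1}$, which vanishes because $\tilde{\boldsymbol{v}}_{j+1}$ is $B^{-1}$-orthogonal to the columns of $\tilde{V}_{j}$; this is the very cancellation used in the proof of Proposition~\ref{prop:ev}. Second, the $B^{-1}$-orthonormality of the columns of $\tilde{V}_{j}$ reduces $(\boldsymbol{x}_{l,1}^{(j)})^{\mathrm{H}} B \boldsymbol{x}_{m,1}^{(j)}$ to $(\tilde{\boldsymbol{y}}_{l}^{(j)})^{\mathrm{H}} \tilde{\boldsymbol{y}}_{m}^{(j)}$. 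Since $\tilde{T}_{j}$ is real symmetric and irreducible, its eigenvectors belong to distinct eigenvalues and are orthogonal, so this term is zero for $l\neq m$ and equals $1$ when $l=m$ (the $\tilde{\boldsymbol{y}}_{i}^{(j)}$ being $2$-norm normalized). Hence for $l\neq m$ only the $B^{-1}$-normalized term $\tilde{\boldsymbol{v}}_{j+1}^{\mathrm{H}} B^{-1} \tilde{\boldsymbol{v}}_{j+1}=1$ survives, giving $(\boldsymbol{x}_{l,2}^{(j)})^{\mathrm{H}} B \boldsymbol{x}_{m,2}^{(j)} = \overline{\gamma_{l}}\,\gamma_{m}$, while specializing to $l=m=i$ gives $\|\boldsymbol{x}_{i,2}^{(j)}\|_{B}^{2} = (\tilde{\theta}_{i}^{(j)})^{2} + |\gamma_{i}|^{2}$.

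To finish I would factor $\|\boldsymbol{x}_{i,2}^{(j)}\|_{B} = |\tilde{\theta}_{i}^{(j)}|\sqrt{1+|\gamma_{i}/\tilde{\theta}_{i}^{(j)}|^{2}}$, then divide $|\overline{\gamma_{l}}\,\gamma_{m}| = |\gamma_{l}|\,|\gamma_{m}|$ by the product of the two norms. The factors $|\tilde{\theta}_{l}^{(j)}|,|\tilde{\theta}_{m}^{(j)}|$ cancel against $|\gamma_{l}|,|\gamma_{m}|$ inside the absolute values, and re-expanding $\gamma_{i}=\tilde{\beta}_{j}\boldsymbol{e}_{j}^{\mathrm{T}}\tilde{\boldsymbol{y}}_{i}^{(j)}$ reproduces exactly the right-hand side of \eqref{eq:sigep:orthonorm}.

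I do not expect a genuine analytic obstacle: the statement is an algebraic identity, and the only real care needed is the bookkeeping of the three distinct normalization conventions ($B^{-1}$-orthonormality of $\tilde{V}_{j}$, $B^{-1}$-normalization of $\tilde{\boldsymbol{v}}_{j+1}$, and $2$-norm normalization of $\tilde{\boldsymbol{y}}_{i}^{(j)}$) and the verification that the two cross terms cancel. The one point worth flagging is that the quotient $\gamma_{i}/\tilde{\theta}_{i}^{(j)}$ requires $\tilde{\theta}_{i}^{(j)}\neq 0$; this is already implicit in the setup, since $\tilde{\theta}_{i}^{(j)}\neq 0$ is exactly what is needed for the approximate eigenvalue $\lambda_{i}^{(j)}$ in \eqref{eq:sigep:value} to be defined.
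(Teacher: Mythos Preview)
Your proposal is correct and follows essentially the same route as the paper. The paper substitutes the definition \eqref{eq:sigep:vec2} and the Lanczos relation \eqref{eq:sigep:lanczos} directly to obtain $(\boldsymbol{x}_{l,2}^{(j)})^{\mathrm{H}} B \boldsymbol{x}_{m,2}^{(j)} = \tilde{\theta}_{l}^{(j)}\tilde{\theta}_{m}^{(j)}\bigl[\delta_{lm} + (\gamma_{l}/\tilde{\theta}_{l}^{(j)})(\gamma_{m}/\tilde{\theta}_{m}^{(j)})\bigr]$, which is algebraically the same four-term expansion you describe, just with the eigenvalue relation $\tilde{T}_{j}\tilde{\boldsymbol{y}}_{i}^{(j)}=\tilde{\theta}_{i}^{(j)}\tilde{\boldsymbol{y}}_{i}^{(j)}$ applied at the end rather than at the start.
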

\begin{proof}
For $1 \le l \le m \le j,$
\begin{align}\label{eq:sigep:ortho}
(\boldsymbol{x}_{l,2}^{(j)})^{\mathrm{H}} B \boldsymbol{x}_{m,2}^{(j)} &= \left[(A -\sigma B)^{-1} \tilde{V}_{j} \tilde{\boldsymbol{y}}_{l}^{(j)}\right]^{\mathrm{H}} B \left[(A -\sigma B)^{-1} \tilde{V}_{j} \tilde{\boldsymbol{y}}_{m}^{(j)}\right]\nonumber\\
&= \left(\tilde{V}_{j} \tilde{T}_{j}\tilde{\boldsymbol{y}}_{l}^{(j)} + \tilde{\boldsymbol{v}}_{j+1}\tilde{\beta}_{j}\boldsymbol{e}_{j}^{\mathrm{T}}\tilde{\boldsymbol{y}}_{l}^{(j)}\right)^{\mathrm{H}} B^{-1} B B^{-1} \left(\tilde{V}_{j} \tilde{T}_{j}\tilde{\boldsymbol{y}}_{m}^{(j)} + \tilde{\boldsymbol{v}}_{j+1}\tilde{\beta}_{j}\boldsymbol{e}_{j}^{\mathrm{T}}\tilde{\boldsymbol{y}}_{m}^{(j)}\right)\nonumber\\
&= \left(\tilde{V}_{j} \tilde{T}_{j}\tilde{\boldsymbol{y}}_{l}^{(j)}\right)^{\mathrm{H}} B^{-1} \left(\tilde{V}_{j} \tilde{T}_{j}\tilde{\boldsymbol{y}}_{m}^{(j)}\right) + \left(\tilde{\boldsymbol{v}}_{j+1}\tilde{\beta}_{j}\boldsymbol{e}_{j}^{\mathrm{T}}\tilde{\boldsymbol{y}}_{l}^{(j)}\right)^{\mathrm{H}} B^{-1} \left(\tilde{\boldsymbol{v}}_{j+1}\tilde{\beta}_{j}\boldsymbol{e}_{j}^{\mathrm{T}}\tilde{\boldsymbol{y}}_{m}^{(j)}\right)\nonumber\\
&= \tilde{\theta}_{l}^{(j)}\tilde{\theta}_{m}^{(j)} \left[ \delta_{lm} + (\tilde{\beta}_{j}\boldsymbol{e}_{j}^{\mathrm{T}}\tilde{\boldsymbol{y}}_{l}^{(j)}/\tilde{\theta}_{l}^{(j)})(\tilde{\beta}_{j}\boldsymbol{e}_{j}^{\mathrm{T}}\tilde{\boldsymbol{y}}_{m}^{(j)}/\tilde{\theta}_{m}^{(j)}) \right].
\end{align}
Here, $\delta_{lm}$ denotes the Kronecker delta.
The second equality follows from \eqref{eq:sigep:lanczos}, and the third and fourth are due to the $B^{-1}$-orthonormality of the columns of $\tilde{V}_{j}$ and $\tilde{\boldsymbol{v}}_{j+1}.$
\eqref{eq:sigep:orthonorm} is an immediate result of \eqref{eq:sigep:ortho} with $l \neq m.$
\end{proof}
\par
Scalars $|\tilde{\beta}_{j}\boldsymbol{e}_{j}^{\mathrm{T}}\tilde{\boldsymbol{y}}_{i}^{(j)}/\tilde{\theta}_{i}^{(j)}|$ in \eqref{eq:sigep:orthonorm} are simply the $B^{-1}$ norm of the residual vectors:
\begin{align}\label{eq:sigep:res2}
\boldsymbol{r}_{i,2}^{(j)} \equiv (A - \lambda_{i}^{(j)} B) \boldsymbol{x}_{i,2}^{(j)} = \left[A - (\sigma + 1/\tilde{\theta}_{i}^{(j)} ) B\right] (A - \sigma B)^{-1} \tilde{V}_{j} \tilde{\boldsymbol{y}}_{i}^{(j)} = - \tilde{\boldsymbol{v}}_{j+1} (\tilde{\beta}_{j}\boldsymbol{e}_{j}^{\mathrm{T}}\tilde{\boldsymbol{y}}_{i}^{(j)}/\tilde{\theta}_{i}^{(j)}).
\end{align}
Since the residual norm $|\tilde{\beta}_{j}\boldsymbol{e}_{j}^{\mathrm{T}}\tilde{\boldsymbol{y}}_{i}^{(j)}/\tilde{\theta}_{i}^{(j)}| \ll 1$ as the SI Lanczos method proceeds, \eqref{eq:sigep:orthonorm} converges to zero and approximate eigenvectors $\boldsymbol{x}_{i,2}^{(j)}$ become $B$-orthogonal to each other.
Therefore, we can say that $\boldsymbol{x}_{i,2}^{(j)}$ have $B$-orthogonality in a practical sense.
\par
\section{A three-stage algorithm for the $k$-th eigenvalue problem}\label{sec:app}%
\subsection{Efficient initial interval for bisection}\label{sec:app:eff}%
To utilize spectral bisection, it is necessary to set an initial interval that contains the $k$-th eigenvalue.
A common approach is to set an interval that includes the entire spectrum, which necessarily contains $\lambda_{k}.$
For standard eigenvalue problems, one of the most economical ways to set such an interval is to use the Gershgorin circle theorem.
Several Gershgorin-type theorems have been proposed for generalized eigenvalue problems \cite{Stewart1975,KosticCvetkovicVarga2009,Nakatsukasa2011}.
These theorems provide an inclusion set of the spectrum that is guaranteed to be bounded only when at least one of $A$ and $B$ is diagonally dominant for each row \cite{Nakatsukasa2011}.
Unfortunately, such diagonal dominance of the matrices is not always assumed to be the case for problems in electronic structure calculations.
The remainder of this subsection presents a systematic way to set an initial interval based on an application of the Lanczos method.
\par
Ritz values by the Lanczos method become more accurate by expanding the subspace, exhibiting the interlacing property \cite[Chapter 1.3]{meurant2006lanczos}.
By denoting the $i$-th Ritz value at the $j$-th iteration of the method as $\theta^{(j)}_{i}$ (Section \ref{sec:bp:pre:lanczos}), the interlacing property can be formally expressed as follows: $\theta^{(j + 1)}_{i} < \theta^{(j)}_{i} < \theta^{(j+1)}_{i + 1}$ for $i\le j < n.$
The monotonic convergence follows from this property, which states that the $i$-th smallest (resp. largest) Ritz value decreases (resp. increases) monotonically and converges to the $i$-th smallest (resp. largest) eigenvalue.
We utilize this monotonicity of Ritz values to set an initial interval.
\par
Algorithm \ref{alg:initial} shows how we set an interval containing $\lambda_{k}$ by utilizing Ritz values, and this process is illustrated in Figure \ref{fig:initial}.
Here, we utilize either the smallest $\theta^{(j)}_{1}$ or largest Ritz values $\theta^{(j)}_{j}$ with $j \ge 1.$
We begin by computing $\theta_{1}^{(1)},$ which is equal to the generalized Rayleigh quotient $\boldsymbol{v}_{1}^{\mathrm{H}} A \boldsymbol{v}_{1} / \boldsymbol{v}_{1}^{\mathrm{H}} B \boldsymbol{v}_{1}$ of a random starting vector $\boldsymbol{v}_{1}.$
The quotient is expected to be the average of the eigenvalues of \eqref{eq:kep}.
It is advantageous to begin from approximately the middle of the eigenvalue distribution when the target index satisfies $1 \ll k \ll n.$
We then compute $\nu^{(1)} = \nu_{\theta_{1}^{(1)}}(A,B).$
If $k \le \nu^{(1)}$, it follows that $\lambda_{k} < \theta_{1}^{(1)} < \theta_{j}^{(j)}$ for $j > 1.$
Therefore, in the subsequent iterations, the smallest Ritz values $\theta_{1}^{(j)}$ are utilized as the points for setting an interval containing $\lambda_{k}.$
On the other hand, if $k > \nu^{(1)}$, the largest Ritz values are selected as the endpoints of an interval.
Accordingly, a sequence of disjoint intervals, i.e., either $[\theta^{(j)}_{1},\theta^{(j-1)}_{1})$ or $[\theta^{(j-1)}_{j-1},\theta^{(j)}_{j})$ with $j > 1,$ is generated until an interval validated as containing $\lambda_{k}$ is obtained.
\par
An advantage of utilizing Ritz values is that an initial interval is necessarily included in and can be much narrower than $[\lambda_{1},\lambda_{n}],$ which leads to a reduction of the number of bisection iterations.
However, we must consider the cost of setting the interval, i.e., $j$ iterations of the Lanczos method and $j$ $LDL^{\mathrm{H}}$ factorizations.
The interlacing property provides a theoretical upper bound of the iteration count required to set an interval.
Here, since $\theta_{1}^{(n-k+1)} < \lambda_{k} < \theta_{k}^{(k)},$ at most ${n-k+1}$ iterations are required for the $k \le \nu^{(1)}$ case, and $k$ iterations are required for the $k > \nu^{(1)}$ case.
In practice, $j$ can be very small because eigenvalues at the ends of $[\lambda_{1},\lambda_{n}]$ are rapidly approximated by the Lanczos method and the target index $k$ is approximately 10--50\% of the matrix size $n,$ thereby satisfying $1 \ll k \ll n.$
In the numerical results presented in Section \ref{sec:numerical:overview:initial}, the iteration count $j$ is two for all experiments.
\par
\begin{algorithm2e}[H]
\SetNlSty{textrm}{}{}\SetNlSkip{0.625em}\DontPrintSemicolon
\SetKwInOut{Input}{Input}\SetKwInOut{Output}{Output}
\SetKwFor{For}{for}{do}{{end for}}
\SetKwIF{If}{ElseIf}{Else}{if}{then}{else if}{else}{{end if}}
\SetKwComment{tcp}{$\triangleright$~}{}\SetCommentSty{textrm}
\Input{matrices $A,B$ of generalized eigenvalue problem \eqref{eq:kep}, target index $k.$}
\Output{interval $[\sigma_{\mathrm{lower}},\sigma_{\mathrm{upper}})$ containing the $k$-th eigenvalue,\\
$\nu_{\mathrm{lower}}=\nu_{\sigma_{\mathrm{lower}}}(A,B),\ \nu_{\mathrm{upper}}=\nu_{\sigma_{\mathrm{upper}}}(A,B).$}
set a random starting vector $\boldsymbol{v}_{1},\ \boldsymbol{v}_{1} := \boldsymbol{v}_{1} / \| \boldsymbol{v}_{1} \|_{B},\ \nu^{(0)}:=0,$\;
\For{$j=1,2,\ldots$}{
	compute $j$-step Lanczos decomposition \eqref{eq:lanczos},\;
	solve standard eigenvalue problem \eqref{eq:triep},\;
	\leIf{$k \le \nu^{(j-1)}$}{$\sigma^{(j)}:=\theta^{(j)}_{1}$}{$\sigma^{(j)}:=\theta^{(j)}_{j},$}
	$LDL^{\mathrm{H}}\leftarrow P(A-\sigma^{(j)}B)P^{\mathrm{T}},$\tcp*{\makebox[27em]{$P:$ permutation for numerical stability\hfill}}\label{alg:initial:ordering}
	$\nu^{(j)}:=\nu_{0}(D,I),$\tcp*{\makebox[27em]{$\nu_{0}(D,I):$ number of negative eigenvalues of block diagonal $D$\hfill}}
	\lIf{\upshape $j \neq 1$ and $(\nu^{(j)} < k \le \nu^{(j-1)}$ or $\nu^{(j-1)} < k \le \nu^{(j)})$}{break,}
}
$\sigma_{\mathrm{lower}}:=\min\{\sigma^{(j-1)},\sigma^{(j)}\},\ \sigma_{\mathrm{upper}}:=\max\{\sigma^{(j-1)},\sigma^{(j)}\},$\;
$\nu_{\mathrm{lower}}:=\min\{\nu^{(j-1)},\nu^{(j)}\},\ \nu_{\mathrm{upper}}:=\max\{\nu^{(j-1)},\nu^{(j)}\}.$\;
\caption{Setting an interval containing the $k$-th eigenvalue\label{alg:initial}}
\end{algorithm2e}
\par
\begin{figure}[htbp]
\centering
\includegraphics[width=0.8\linewidth]{./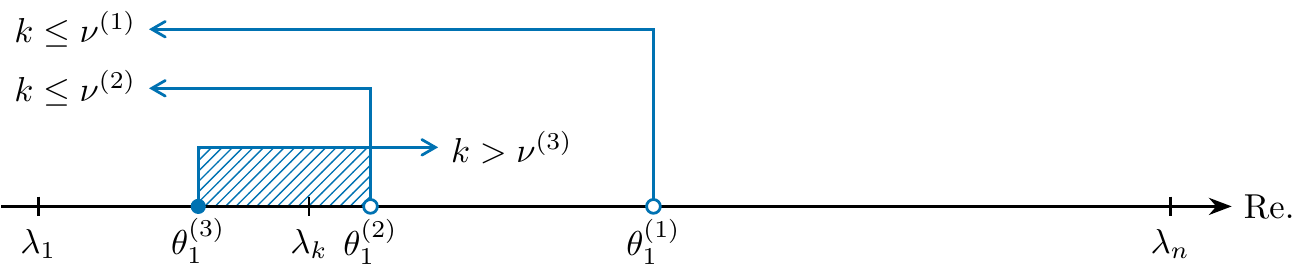}
\caption{Illustration of Algorithm \ref{alg:initial}.
$[\theta_{1}^{(2)},\theta_{1}^{(1)})$ does not contain $\lambda_{k}$ but $[\theta_{1}^{(3)},\theta_{1}^{(2)})$ does (hatched).}
\label{fig:initial}
\end{figure}
\par
\subsection{Computation of the $k$-th eigenpair with validation}\label{sec:app:validate}%
Once an initial interval is set for bisection (Section \ref{sec:app:eff}), it is narrowed down until it contains the $k$-th eigenvalue and some nearby eigenvalues.
We then compute the $k$-th eigenpair along with the other eigenpairs of the interval while validating their index.
The eigenpairs of the interval can be computed by a variety of methods (Section \ref{sec:intro}).
We utilize a modified SI Lanczos method (Section \ref{sec:bp:pre:silanczos}), where approximate eigenvectors $\boldsymbol{x}_{i,2}^{(j)}$ are computed based on \eqref{eq:sigep:vec2} and have $B$-orthogonality, as shown in Proposition \ref{prop:ortho}.
\par
In Proposition \ref{prop:bound}, we explain that an eigenvalue error bound for validating the index of the eigenpairs can be evaluated from the residual vector of the SI Lanczos method at negligible cost.
\par
\begin{proposition}\label{prop:bound}
With the same notation as (\ref{eq:sigep:lanczos}--\ref{eq:sigep:value}) in Section \ref{sec:bp:pre:silanczos}, there is an eigenvalue $\lambda_{{l}_{i}^{(j)}}$ of problem \eqref{eq:kep} such that:
\begin{align}\label{eq:sigep:errbnd}%
\lambda_{{l}_{i}^{(j)}} \in \Gamma_{i}^{(j)} \equiv [\lambda_{i}^{(j)}-\eta_{i}^{(j)},\lambda_{i}^{(j)}+\eta_{i}^{(j)}],\quad \eta_{i}^{(j)} \equiv \frac{1}{| \tilde{\theta}_{i}^{(j)} |} \cdot \frac{| \tilde{\beta}_{j} \boldsymbol{e}_{j}^{\mathrm{T}} \tilde{\boldsymbol{y}}_{i}^{(j)} / \tilde{\theta}_{i}^{(j)} |}{\sqrt{1 + | \tilde{\beta}_{j} \boldsymbol{e}_{j}^{\mathrm{T}} \tilde{\boldsymbol{y}}_{i}^{(j)} / \tilde{\theta}_{i}^{(j)}|^{2}}}.
\end{align}%
\end{proposition}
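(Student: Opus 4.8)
The plan is to recognize \eqref{eq:sigep:errbnd} as a residual-based eigenvalue inclusion bound for the Hermitian pencil $(A,B)$, expressed through the $B$- and $B^{-1}$-inner products that are natural for this setting. The key observation is that $\eta_{i}^{(j)}$ is nothing but the ratio $\|\boldsymbol{r}_{i,2}^{(j)}\|_{B^{-1}} / \|\boldsymbol{x}_{i,2}^{(j)}\|_{B}$ of the residual and the approximate eigenvector, so the argument splits into an abstract inclusion bound and an explicit evaluation of these two norms using quantities already produced by the SI Lanczos decomposition.

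First I would establish the abstract bound: for the approximate eigenpair $(\lambda_{i}^{(j)}, \boldsymbol{x}_{i,2}^{(j)})$ with residual $\boldsymbol{r}_{i,2}^{(j)} = (A - \lambda_{i}^{(j)} B)\boldsymbol{x}_{i,2}^{(j)}$, there exists an eigenvalue $\lambda_{l_{i}^{(j)}}$ of \eqref{eq:kep} with $|\lambda_{l_{i}^{(j)}} - \lambda_{i}^{(j)}| \le \|\boldsymbol{r}_{i,2}^{(j)}\|_{B^{-1}} / \|\boldsymbol{x}_{i,2}^{(j)}\|_{B}$. To see this, let $\{(\lambda_{l}, \boldsymbol{q}_{l})\}_{l=1}^{n}$ be the eigenpairs of \eqref{eq:kep} with the $\boldsymbol{q}_{l}$ chosen $B$-orthonormal, so that they form a $B$-orthonormal basis of $\mathbb{C}^{n}$. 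Expanding $\boldsymbol{x}_{i,2}^{(j)} = \sum_{l} c_{l} \boldsymbol{q}_{l}$ with $c_{l} = \boldsymbol{q}_{l}^{\mathrm{H}} B \boldsymbol{x}_{i,2}^{(j)}$ gives $\|\boldsymbol{x}_{i,2}^{(j)}\|_{B}^{2} = \sum_{l} |c_{l}|^{2}$, while $\boldsymbol{r}_{i,2}^{(j)} = \sum_{l} c_{l} (\lambda_{l} - \lambda_{i}^{(j)}) B \boldsymbol{q}_{l}$ yields $\|\boldsymbol{r}_{i,2}^{(j)}\|_{B^{-1}}^{2} = \sum_{l} |c_{l}|^{2} (\lambda_{l} - \lambda_{i}^{(j)})^{2}$, since the vectors $B\boldsymbol{q}_{l}$ are $B^{-1}$-orthonormal. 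Bounding the latter sum below by $\min_{l}(\lambda_{l} - \lambda_{i}^{(j)})^{2}$ times $\sum_{l}|c_{l}|^{2}$ and taking square roots gives the bound, with $l_{i}^{(j)}$ the index attaining the minimum.

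It then remains to evaluate the two norms explicitly. For the numerator, \eqref{eq:sigep:res2} already supplies $\boldsymbol{r}_{i,2}^{(j)} = -\tilde{\boldsymbol{v}}_{j+1}(\tilde{\beta}_{j} \boldsymbol{e}_{j}^{\mathrm{T}} \tilde{\boldsymbol{y}}_{i}^{(j)} / \tilde{\theta}_{i}^{(j)})$, and since $\tilde{\boldsymbol{v}}_{j+1}$ is normalized in the $B^{-1}$-norm, $\|\boldsymbol{r}_{i,2}^{(j)}\|_{B^{-1}} = |\tilde{\beta}_{j} \boldsymbol{e}_{j}^{\mathrm{T}} \tilde{\boldsymbol{y}}_{i}^{(j)} / \tilde{\theta}_{i}^{(j)}|$. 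For the denominator, setting $l = m = i$ in \eqref{eq:sigep:ortho} gives $\|\boldsymbol{x}_{i,2}^{(j)}\|_{B}^{2} = (\tilde{\theta}_{i}^{(j)})^{2}[1 + |\tilde{\beta}_{j} \boldsymbol{e}_{j}^{\mathrm{T}} \tilde{\boldsymbol{y}}_{i}^{(j)} / \tilde{\theta}_{i}^{(j)}|^{2}]$. Dividing the numerator by the square root of this denominator produces exactly the expression for $\eta_{i}^{(j)}$ stated in \eqref{eq:sigep:errbnd}, which completes the argument.

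I expect the only genuinely delicate step to be the abstract inclusion bound, specifically getting the right gauge: the residual must be measured in the $B^{-1}$-norm rather than the $2$-norm for the eigenbasis expansion to telescope cleanly, and this is precisely what makes $\|\boldsymbol{r}_{i,2}^{(j)}\|_{B^{-1}}$ computable at negligible cost from \eqref{eq:sigep:res2}. The remaining manipulations are routine substitutions into results already established in Section \ref{sec:bp:pre:silanczos}.
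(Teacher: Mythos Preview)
Your proposal is correct and follows essentially the same approach as the paper: both invoke the residual-type bound $|\lambda_{l}-\mu|\le \|(A-\mu B)\boldsymbol{u}\|_{B^{-1}}/\|\boldsymbol{u}\|_{B}$ (the paper cites it as \cite[Theorem 15.9.1]{parlett1998symmetric}, writing the denominator as the equivalent $\|B\boldsymbol{u}\|_{B^{-1}}$, whereas you re-derive it by eigenexpansion), and both evaluate the numerator via \eqref{eq:sigep:res2} and the denominator via the $l=m$ case of \eqref{eq:sigep:ortho}.
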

\begin{proof}
According to a classical result of the perturbation theory \cite[Theorem 15.9.1]{parlett1998symmetric}, there is an eigenvalue $\lambda_{l}$ of problem \eqref{eq:kep} for a scalar $\mu$ and non-zero vector $\boldsymbol{u}$ such that:
\begin{align}\label{eq:sigep:errbnd:theo}%
|\lambda_{l} - \mu | \le \frac{\| (A - \mu B)\boldsymbol{u} \|_{B^{-1}}}{\| B \boldsymbol{u} \|_{B^{-1}}}.
\end{align}
By substituting $(\mu,\boldsymbol{u})$ in \eqref{eq:sigep:errbnd:theo} with an approximate eigenpair $(\lambda_{i}^{(j)},\boldsymbol{x}_{i,2}^{(j)})$ obtained by the SI Lanczos method, we have from \eqref{eq:sigep:ortho} and residual \eqref{eq:sigep:res2} that:
\begin{align}\label{eq:sigep:errbnd:theo:sub}
|\lambda_{{l}_{i}^{(j)}} - \lambda_{i}^{(j)} | \le \frac{\| (A - \lambda_{i}^{(j)} B)\boldsymbol{x}_{i,2}^{(j)} \|_{B^{-1}}}{\| B \boldsymbol{x}_{i,2}^{(j)} \|_{B^{-1}}} = \frac{\| \boldsymbol{r}_{i,2}^{(j)} \|_{B^{-1}}}{\| \boldsymbol{x}_{i,2}^{(j)} \|_{B}} = \frac{1}{| \tilde{\theta}_{i}^{(j)} |} \cdot \frac{| \tilde{\beta}_{j} \boldsymbol{e}_{j}^{\mathrm{T}} \tilde{\boldsymbol{y}}_{i}^{(j)} / \tilde{\theta}_{i}^{(j)} |}{\sqrt{1 + | \tilde{\beta}_{j} \boldsymbol{e}_{j}^{\mathrm{T}} \tilde{\boldsymbol{y}}_{i}^{(j)} / \tilde{\theta}_{i}^{(j)}|^{2}}}.
\end{align}
By rewriting inequality \eqref{eq:sigep:errbnd:theo:sub}, we have an eigenvalue error bound $\Gamma_{i}^{(j)}$ for $\lambda_{i}^{(j)}$ that includes an eigenvalue $\lambda_{{l}_{i}^{(j)}}$ of problem \eqref{eq:kep}.
\end{proof}
\par
When bound \eqref{eq:sigep:errbnd} becomes sufficiently narrow, we can associate $\lambda_{i}^{(j)}$ with an eigenvalue of problem \eqref{eq:kep} and thus validate the index of the approximate eigenpairs.
Here, assume that an interval $[\sigma_{\mathrm{lower}},\sigma_{\mathrm{upper}})$ contains $m$ eigenvalues.
If there are $m$ error bounds in the interval that are mutually disjoint, as illustrated in Figure \ref{fig:bound}\subref{fig:bound:true}, then each bound contains only one of the $m$ eigenvalues of the interval.
When the approximate eigenpairs have one-to-one correspondence with the eigenvalues of the interval, the index of each approximate eigenpair can be validated readily because we already know the index range of the eigenpairs of the interval from the bisection.
\par
\begin{figure}[htbp]
\centering
\begin{minipage}{1.0\linewidth}
\centering
\includegraphics[width=0.8\linewidth]{./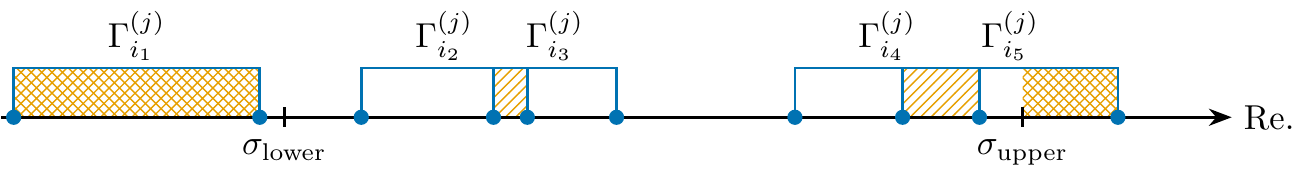}
\subcaption{$\Gamma_{i_{1}}^{(j)}$ and $\Gamma_{i_{5}}^{(j)}$ are not included in the interval (crosshatched).
$\Gamma_{i_{2}}^{(j)}$ and $\Gamma_{i_{3}}^{(j)}$ overlap, so do $\Gamma_{i_{4}}^{(j)}$ and $\Gamma_{i_{5}}^{(j)}$ (hatched).}\label{fig:bound:false}
\end{minipage}
\begin{minipage}{1.0\linewidth}
\centering
\includegraphics[width=0.8\linewidth]{./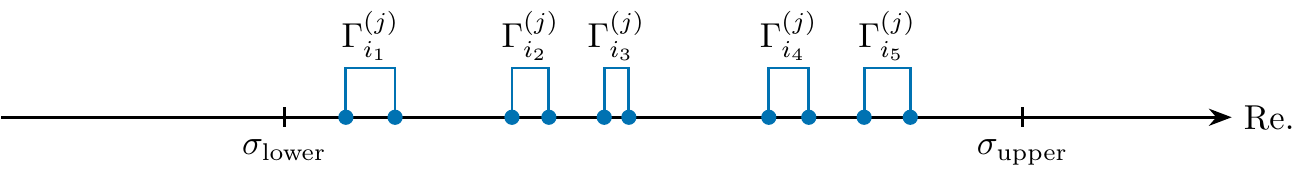}
\subcaption{$m=5$ mutually disjoint bounds in the interval}\label{fig:bound:true}
\end{minipage}
\caption{Error bounds \eqref{eq:sigep:errbnd} for $m = 5$}
\label{fig:bound}
\end{figure}
\par
Algorithm \ref{alg:validate} shows an implementation example, where the midpoint $\sigma=(\sigma_{\mathrm{lower}}+\sigma_{\mathrm{upper}})/2$ of the interval is selected as the shift for the SI Lanczos method.
In line \ref{alg:validate:sort}, we sort the approximate eigenpairs $(\lambda_{i}^{(j)},\boldsymbol{x}_{i,2}^{(j)})$ of the method and re-index them for simplicity:
\begin{align}
| \lambda_{1}^{(j)} - \sigma | \le | \lambda_{2}^{(j)} - \sigma | \le \cdots \le | \lambda_{j}^{(j)} - \sigma |,\ \text{or equivalently}\ | \tilde{\theta}_{1}^{(j)} | \ge | \tilde{\theta}_{2}^{(j)} | \ge \cdots \ge | \tilde{\theta}_{j}^{(j)}|.\label{eq:sigep:errbnd:index:approx}
\end{align}
Then, if the following two conditions hold, each $\lambda_{i}^{(j)}$ best approximates a distinct eigenvalue of the interval:
\begin{align}
\text{inclusion:}\quad &\Gamma_{i}^{(j)} \subset [\sigma_{\mathrm{lower}},\sigma_{\mathrm{upper}}) \ \text{for}\ 1 \le i \le m.\label{eq:sigep:errbnd:cond:include}\\
\text{disjointness:}\quad &\Gamma_{i_{1}}^{(j)} \cap \Gamma_{i_{2}}^{(j)} = \varnothing\ \text{for}\ 1 \le i_{1} < i_{2} \le m.\label{eq:sigep:errbnd:cond:disjoint}
\end{align}
When \eqref{eq:sigep:errbnd:cond:include} and \eqref{eq:sigep:errbnd:cond:disjoint} hold, we test for convergence in line \ref{alg:validate:test}.
The algorithm is assumed to reach convergence when the relative residual $2$-norm of each approximate eigenpair of the interval becomes less than a given tolerance $\tau_{\mathrm{res}}.$
We also utilize the following criterion for the convergence test, which we refer to as the relative difference $2$-norm between the approximate eigenvectors at the $(j-1)$-th and $j$-th iterations:
\begin{align}\label{eq:reldiff}
\| \boldsymbol{x}_{i,2}^{(j)} - \boldsymbol{x}_{i,2}^{(j-1)} \|_{2} / \| \boldsymbol{x}_{i,2}^{(j-1)} \|_{2} < \tau_{\mathrm{diff}}\ \text{for}\ 1 \le i \le m.
\end{align}
Here, $\boldsymbol{x}_{i,2}^{(j-1)}$ and $\boldsymbol{x}_{i,2}^{(j)}$ are normalized to satisfy $\| \boldsymbol{x}_{i,2}^{(j-1)} \|_{2} = \| \boldsymbol{x}_{i,2}^{(j)} \|_{2}.$
As discussed in Section \ref{sec:numerical:overview:eigenpair}, criterion \eqref{eq:reldiff} is required because a small relative residual $2$-norm does not necessarily imply that an approximate eigenvector is close to convergence.
After the convergence test, we sort the approximate eigenpairs of the interval to the original order in line \ref{alg:validate:sortback} to obtain the $k$-th eigenpair in line \ref{alg:validate:index}.
\par
\begin{algorithm2e}[H]%
\SetNlSty{textrm}{}{}\SetNlSkip{0.625em}\DontPrintSemicolon
\SetKwInOut{Input}{Input}\SetKwInOut{Output}{Output}
\SetKwFor{For}{for}{do}{{end for}}
\SetKwIF{If}{ElseIf}{Else}{if}{then}{else if}{else}{{end if}}
\Input{matrices $A,B$ of generalized eigenvalue problem \eqref{eq:kep}, target index $k,$\\
interval $[\sigma_{\mathrm{lower}},\sigma_{\mathrm{upper}})$ containing the $k$-th eigenvalue,\\
$\nu_{\mathrm{lower}}=\nu_{\sigma_{\mathrm{lower}}}(A,B),\ \nu_{\mathrm{upper}}=\nu_{\sigma_{\mathrm{upper}}}(A,B),$\\
tolerance $\tau_{\mathrm{res}}$ for relative residual $2$-norm, tolerance $\tau_{\mathrm{diff}}$ for relative difference $2$-norm.}
\Output{approximate eigenpair $(\hat{\lambda}_{k},\hat{\boldsymbol{x}}_{k}),$ where $\| (A - \hat{\lambda}_{k}B)\hat{\boldsymbol{x}}_{k} \|_{2} / \| \hat{\boldsymbol{x}}_{k} \|_{2} < \tau_{\mathrm{res}}.$}
$l:=k - \nu_{\mathrm{lower}},\ m:=\nu_{\mathrm{upper}} - \nu_{\mathrm{lower}},\ \sigma:=(\sigma_{\mathrm{lower}}+\sigma_{\mathrm{upper}})/2,$\;
set a random starting vector $\tilde{\boldsymbol{v}}_{1},\ \tilde{\boldsymbol{v}}_{1} := \tilde{\boldsymbol{v}}_{1} / \| \tilde{\boldsymbol{v}}_{1} \|_{B^{-1}},$\;
\For{$j=1,2,\ldots$}{
	compute $j$-step SI Lanczos decomposition \eqref{eq:sigep:lanczos} with reorthogonalization,\;
	\If{$j \ge m$}{
	solve standard eigenvalue problem \eqref{eq:sigep:triep},\;
	\lFor{$i=1$ \KwTo j}{compute approximate eigenpairs $(\lambda_{i}^{(j)},\boldsymbol{x}_{i,2}^{(j)})$ as \eqref{eq:sigep:value} and \eqref{eq:sigep:vec2},}
	sort $(\lambda_{i}^{(j)},\boldsymbol{x}_{i,2}^{(j)})$ with $1 \le i \le j$ to index as \eqref{eq:sigep:errbnd:index:approx},\label{alg:validate:sort}\;
	\lFor{$i=1$ \KwTo m}{compute the $i$-th eigenvalue error bound \eqref{eq:sigep:errbnd},}
		\lIf{\upshape \eqref{eq:sigep:errbnd:cond:include} and \eqref{eq:sigep:errbnd:cond:disjoint} and ($\| (A - \lambda_{i}^{(j)}B)\boldsymbol{x}_{i,2}^{(j)} \|_{2} / \| \boldsymbol{x}_{i,2}^{(j)} \|_{2} < \tau_{\mathrm{res}}$ for $1 \le i \le m$) and \eqref{eq:reldiff}}{break,}\label{alg:validate:test}
		}
}
sort $(\lambda_{i}^{(j)},\boldsymbol{x}_{i,2}^{(j)})$ with $1 \le i \le m$ to index in increasing order $\lambda_{1}^{(j)} < \lambda_{2}^{(j)} < \cdots <\lambda_{m}^{(j)},$\label{alg:validate:sortback}\;
$(\hat{\lambda}_{k},\hat{\boldsymbol{x}}_{k}):=(\lambda_{l}^{(j)},\boldsymbol{x}_{l,2}^{(j)}).$\label{alg:validate:index}\;
\caption{Computation of the $k$-th eigenpair\label{alg:validate}}
\end{algorithm2e}
\par
\subsection{Application to large-scale problems}\label{sec:app:large}%
We utilize a sparse direct linear solver for computing $\nu_{\sigma}(A,B)$ of large sparse $A,B.$
Algorithm \ref{alg:bisect:modified} shows a modification of Algorithm \ref{alg:bisect} to narrow down an initial interval.
The main difference can be found in line \ref{alg:bisect:modified:sparse}, where a fill-reducing ordering $Q$ is utilized to handle large sparse matrices, in addition to permutation $P$ for numerical stability.
In contrast to $P,$ ordering $Q$ is independent from shift $\sigma$ because it depends on only the sparsity structure of $A -\sigma B.$
Thus, once ordering and symbolic factorization are obtained for some shifted matrix, they can be recycled for other shifted matrices with varying shifts in the subsequent iterations.
Computation of $\nu_{\sigma}(A,B)$ in Algorithm \ref{alg:initial} can be performed in the same manner by utilizing a sparse direct linear solver.
If shifted matrices are known to have a data-sparse (hierarchical low-rank) structure, their factorization can be done fast in a recursive manner \cite{lintner2004eigenvalue,benner2012computing,xi2014fast}.
\par
\begin{algorithm2e}[H]%
\SetAlgoRefName{1${}^{\prime}$}
\SetNlSty{textrm}{}{}\SetNlSkip{0.625em}\DontPrintSemicolon
\SetKwInOut{Input}{Input}\SetKwInOut{Output}{Output}
\SetKwIF{If}{ElseIf}{Else}{if}{then}{else if}{else}{{end if}}
\SetKwIF{RIf}{RElseIf}{RElse}{repeat until}{}{}{}{{}}
\SetKwComment{tcp}{$\triangleright$~}{}\SetCommentSty{textrm}
\Input{matrices $A,B$ of generalized eigenvalue problem \eqref{eq:kep}, target index $k,$\\
initial interval $[\sigma_{\mathrm{lower}},\sigma_{\mathrm{upper}})$ containing the $k$-th eigenvalue,\\
$\nu_{\mathrm{lower}}=\nu_{\sigma_{\mathrm{lower}}}(A,B),\ \nu_{\mathrm{upper}}=\nu_{\sigma_{\mathrm{upper}}}(A,B),$ stopping criterion $m_{\mathrm{max}}.$}
\Output{interval $[\sigma_{\mathrm{lower}},\sigma_{\mathrm{upper}})$, $\nu_{\mathrm{lower}},\ \nu_{\mathrm{upper}}.$}
\RIf{$\nu_{\mathrm{upper}} - \nu_{\mathrm{lower}} \le m_{\mathrm{max}}$\label{alg:bisect:modified:criterion}\tcp*[f]{\makebox[23em]{$\nu_{\mathrm{upper}} - \nu_{\mathrm{lower}}$: number of eigenvalues in the interval\hfill}}}{
	$\sigma:=(\sigma_{\mathrm{lower}}+\sigma_{\mathrm{upper}})/2,$\label{alg:bisect:modified:root}\;
	$LDL^{\mathrm{H}}\leftarrow PQ(A-\sigma B)Q^{\mathrm{T}}P^{\mathrm{T}},$\tcp*{\makebox[23em]{$Q:$ fill-reducing ordering for sparse matrices\hfill}}\label{alg:bisect:modified:sparse}
	$\nu:=\nu_{0}(D,I),$\;
	\leIf{$k \le \nu$}{$\sigma_{\mathrm{upper}}:=\sigma,\ \nu_{\mathrm{upper}}:=\nu$}{$\sigma_{\mathrm{lower}}:=\sigma,\ \nu_{\mathrm{lower}}:=\nu.$}}	
\caption{Spectral bisection for the $k$-th eigenvalue problem\label{alg:bisect:modified}}
\end{algorithm2e}
\par
\subsection{Overview of the three-stage algorithm}\label{sec:app:over}%
Algorithm \ref{alg:frame} presents a three-stage algorithm for solving the $k$-th eigenvalue problem.
We first run Algorithm \ref{alg:initial} to set an interval $[\sigma_{\mathrm{lower}},\sigma_{\mathrm{upper}})$ containing the $k$-th eigenvalue.
We then run Algorithm \ref{alg:bisect:modified} to narrow down the interval until it contains less than or equal to $m_{\mathrm{max}}$ eigenvalues that include $\lambda_{k}.$
Compared with further narrowing down the interval to isolate $\lambda_{k}$ from the other eigenvalues, approximately $\log_{2}m_{\mathrm{max}}$ bisection iterations can be reduced.
Finally, we run Algorithm \ref{alg:validate} to compute the $k$-th eigenpair along with the other eigenpairs of the interval while validating their index.
\par
\begin{algorithm2e}[H]%
\SetNlSty{textrm}{}{}\SetNlSkip{0.625em}\DontPrintSemicolon
\SetKwInOut{Input}{Input}\SetKwInOut{Output}{Output}
\SetKwFor{For}{for}{do}{{end for}}
\SetKwIF{If}{ElseIf}{Else}{if}{then}{else if}{else}{{end if}}
\SetKwIF{RIf}{RElseIf}{RElse}{repeat until}{}{}{}{{}}
\SetKwComment{tcp}{$\triangleright$~}{}\SetCommentSty{textrm}
\Input{matrices $A,B$ of generalized eigenvalue problem \eqref{eq:kep}, target index $k,$\\
stopping criterion $m_{\mathrm{max}}$ for spectral bisection,\\
tolerance $\tau_{\mathrm{res}}$ for relative residual $2$-norm, tolerance $\tau_{\mathrm{diff}}$ for relative difference $2$-norm.}
\Output{approximate eigenpair $(\hat{\lambda}_{k},\hat{\boldsymbol{x}}_{k}).$}
run Algorithm \ref{alg:initial} to set an interval $[\sigma_{\mathrm{lower}},\sigma_{\mathrm{upper}})$ containing $\lambda_{k}$ and obtain $\nu_{\mathrm{lower}} = \nu_{\sigma_{\mathrm{lower}}}(A,B)$ and $\nu_{\mathrm{upper}} = \nu_{\sigma_{\mathrm{upper}}}(A,B),$\label{alg:frame:initial}\;
run Algorithm \ref{alg:bisect:modified} to narrow down the interval $[\sigma_{\mathrm{lower}},\sigma_{\mathrm{upper}})$ until it contains less than or equal to $m_{\mathrm{max}}$ eigenvalues that include $\lambda_{k},$\;
run Algorithm \ref{alg:validate} to obtain the $k$-th eigenpair $(\hat{\lambda}_{k},\hat{\boldsymbol{x}}_{k})$ with its relative residual and difference $2$-norms less than $\tau_{\mathrm{res}}$ and $\tau_{\mathrm{diff}},$ respectively.\;
\caption{Three-stage algorithm for solving the $k$-th eigenvalue problem \label{alg:frame}}
\end{algorithm2e}
\par
Here, $m_{\mathrm{max}}$ is an important parameter that influences the overall performance of the three-stage algorithm because there is a trade-off between the second and third stages (Algorithms \ref{alg:bisect:modified} and \ref{alg:validate}), i.e., greater $m_{\mathrm{max}}$ results in fewer required bisection iterations, while more iterations are required for the SI Lanczos method.
$m_{\mathrm{max}}$ was set the same for each problem in the numerical experiments.
Tuning this parameter will be the focus of future work.
\par
In the presence of multiple eigenvalues or a cluster of eigenvalues around $\lambda_{k},$ modification to Algorithm \ref{alg:frame} is necessary  because the algorithm is ineffective in detecting them and may end up in misconvergence; the stopping criterion (line \ref{alg:bisect:modified:criterion} of Algorithm \ref{alg:bisect:modified}) for bisection and a convergence criterion \eqref{eq:sigep:errbnd:cond:disjoint} for the SI Lanczos method may not be satisfied. In addition, from the SI Lanczos method, only a one dimensional representation is obtained for the eigenspace corresponding to a multiple eigenvalue.
To detect multiple or a cluster of eigenvalues during bisection, the length of the interval can be used along with the current stopping criterion.
If detected, they and their corresponding eigenspace can be computed by, e.g., a block SI Lanczos method \cite{grimes1994shifted} whose block size can be determined from the number of eigenvalues in the interval.
When a block eigensolver is used, the convergence criteria (line \ref{alg:validate:test} of Algorithm \ref{alg:validate}) need to be modified accordingly to take account of the multiplicity or the cluster.
Further investigation of the modification will be future work.
\par
\section{Numerical experiments}\label{sec:numerical}%
This section reports the numerical results of several real research problems from electronic structure calculations and a comparison of the proposed three-stage algorithm (Algorithm \ref{alg:frame}) and dense eigensolvers.
In Section \ref{sec:numerical:matrix}, we describe the matrix data used in the numerical experiments.
Section \ref{sec:numerical:comp} provides implementation details of dense eigensolvers and the three-stage algorithm.
The numerical results are reported in Section \ref{sec:numerical:result}.
\par
\subsection{Matrix data}\label{sec:numerical:matrix}%
Table \ref{tab:matrix} shows the matrix data used in the numerical experiments, which were generated by the ELSES quantum mechanical nanomaterial simulator \cite{hoshi2012order} and obtained from the ELSES Matrix Library (\url{http://www.elses.jp/matrix/}).
Here, the two $n \times n$ matrices $A$ and $B$ of each matrix data are real symmetric and real symmetric positive definite, respectively.
$A$ and $B$ have the same sparsity structure shown in Figure \ref{fig:sparse} with \#nz non-zero elements (in their lower triangular part) in Table \ref{tab:matrix}.
The target index $k$ is associated with the HO state.
The entire spectrum is included in the interval $[\lambda_{1},\lambda_{n}].$
The last column of Table \ref{tab:matrix} describes the origin of the matrix data.
\par
\begin{table}[htbp]
\centering
\caption{Matrix data}
\label{tab:matrix}
\begin{tabular}[c]{l|r|r|r|r@{,}l|l}
\hline
\multicolumn{1}{c|}{Data}	&	\multicolumn{1}{c|}{$n$}	&	\multicolumn{1}{c|}{\#nz}			&	\multicolumn{1}{c|}{$k$} & \multicolumn{2}{c|}{$[\lambda_{1},\lambda_{n}]$}		&	\multicolumn{1}{c}{Material}	\\
\hline
APF4686	&	4686	&	53950 &	2343	& $[-1.157$ & $\phantom{1}5.581]$ & \begin{tabular}[c]{@{}l@{}}amorphous-like conjugated polymer,\\ poly(9,9-dioctyl-fluorene) \cite{hoshi2012order}\end{tabular}\\
\hline
AUNW9180	&	9180	&	1783313 	&	5610	& $[-0.210$ & $\phantom{1}0.883]$ &	\begin{tabular}[c]{@{}l@{}}helical multishell gold nanowire\\ with defects \cite{HoshiFujiwara2009}\end{tabular}\\
\hline
CPPE32346	&	32346	&	861764 &	16173	& $[-1.169$ & $\phantom{1}7.953]$ &	\begin{tabular}[c]{@{}l@{}}condensed polymer systems,\\ poly(phenylene-ethynylene) \cite{imachi2016one,hoshi2016extremely}\end{tabular}\\
\hline
NCCS430080	&	430080	&	10696416	&	215040	& $[-1.195$ & $13.602]$ &	\begin{tabular}[c]{@{}l@{}}sp\textsuperscript{2}--sp\textsuperscript{3} nano-composite carbon\\ solid \cite{hoshi2013ten}\\
\end{tabular}\\
\hline
VCNT1512000	&	1512000	&	294953351	&	336000	& $[-1.098$ & $\phantom{1}0.475]$ &	\begin{tabular}[c]{@{}l@{}}vibrating carbon nanotube within\\ a supercell with spd orbitals \cite{PhysRevB.61.7965}\end{tabular}	\\
\hline
\end{tabular}
\end{table}
\begin{figure}[htbp]
\centering
\begin{tabular}{ccccc}
\includegraphics[width=0.173\linewidth]{./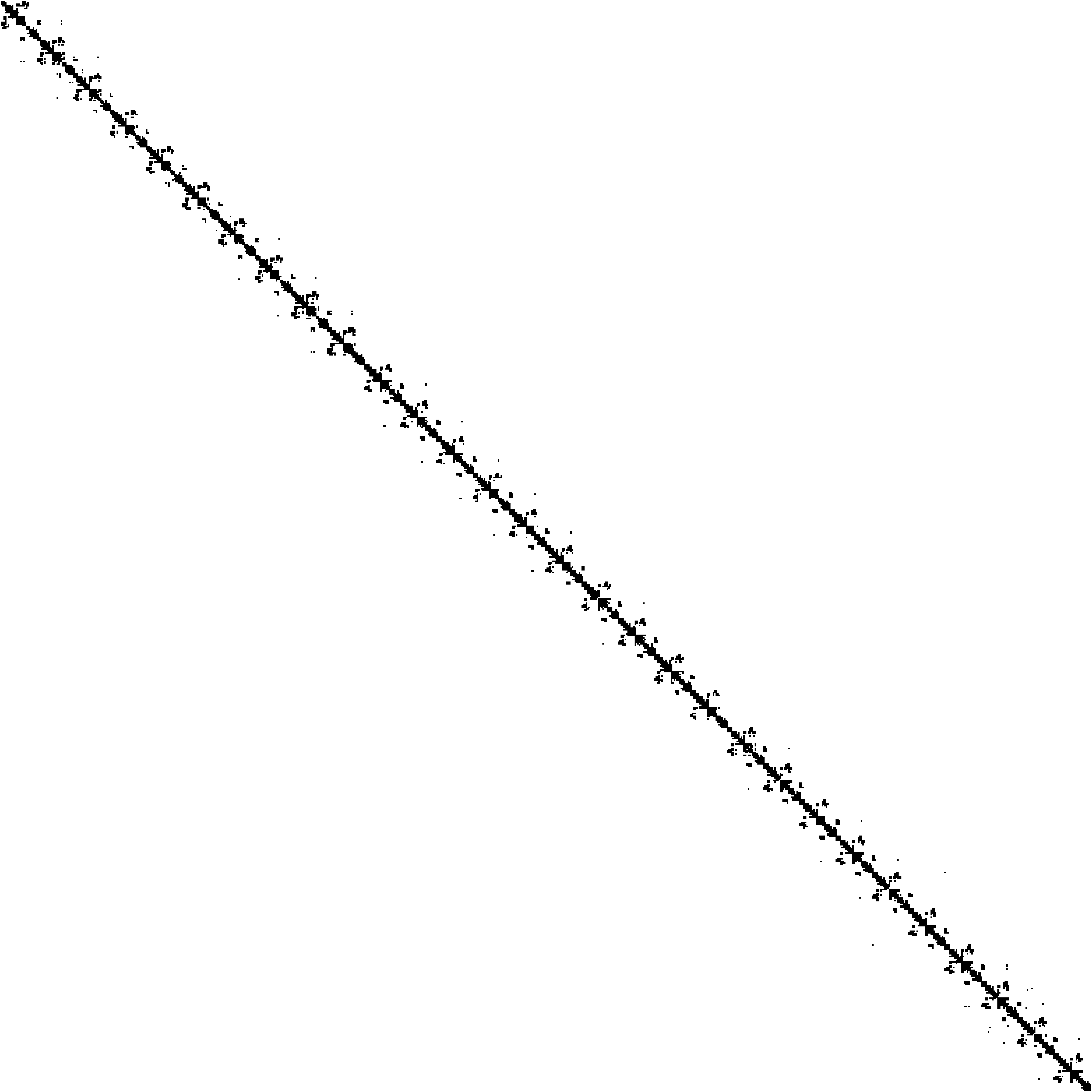}&%
\includegraphics[width=0.173\linewidth]{./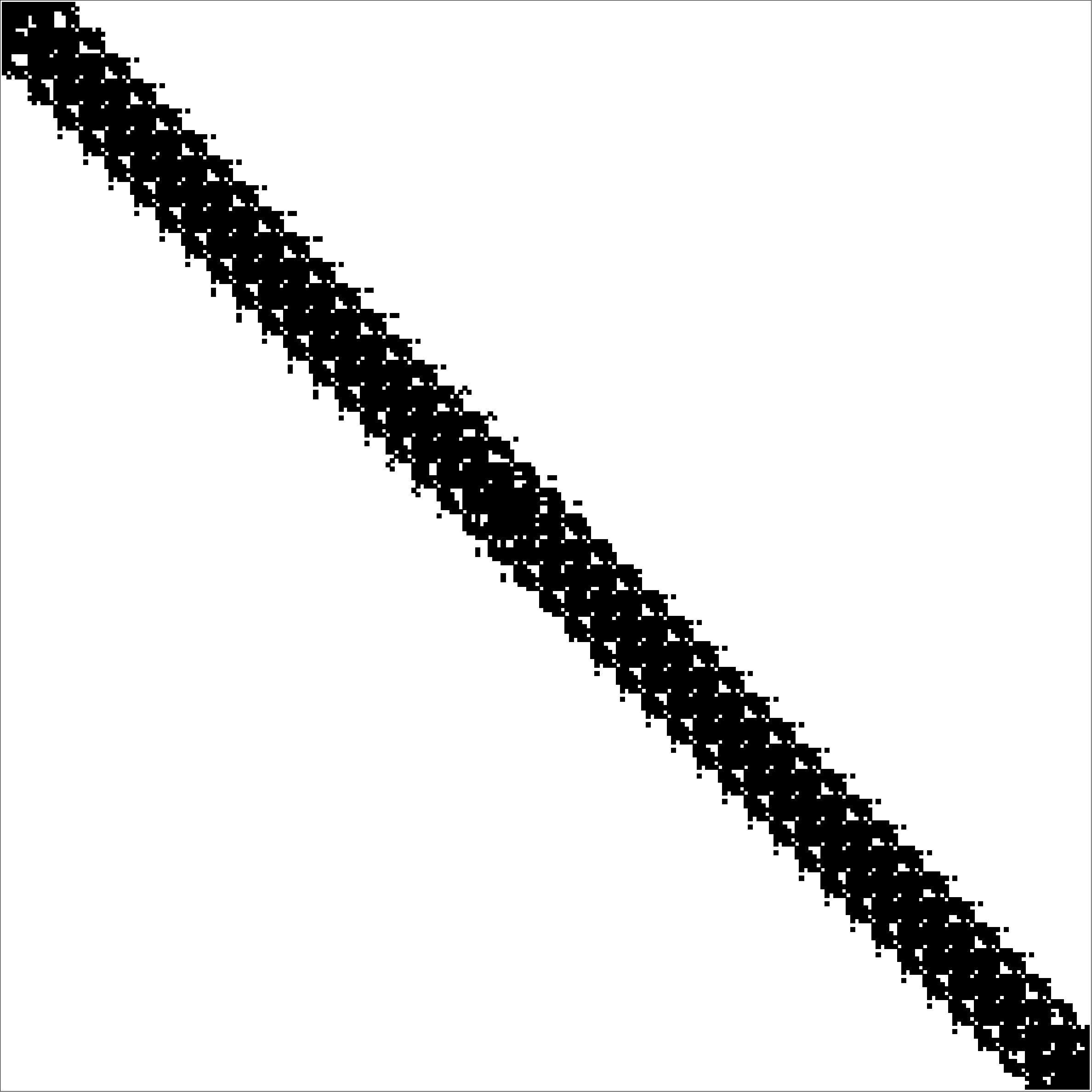}&%
\includegraphics[width=0.173\linewidth]{./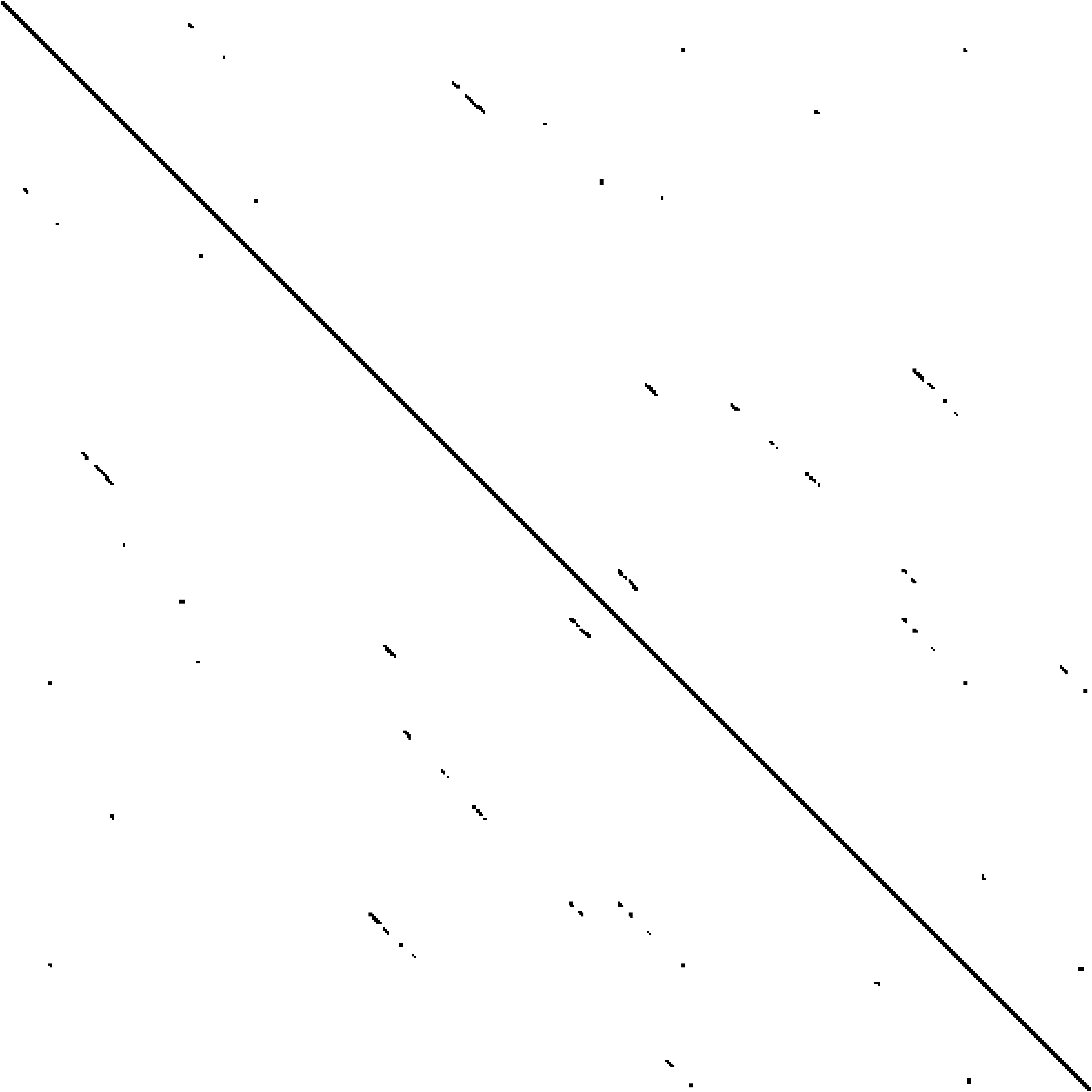}&%
\includegraphics[width=0.173\linewidth]{./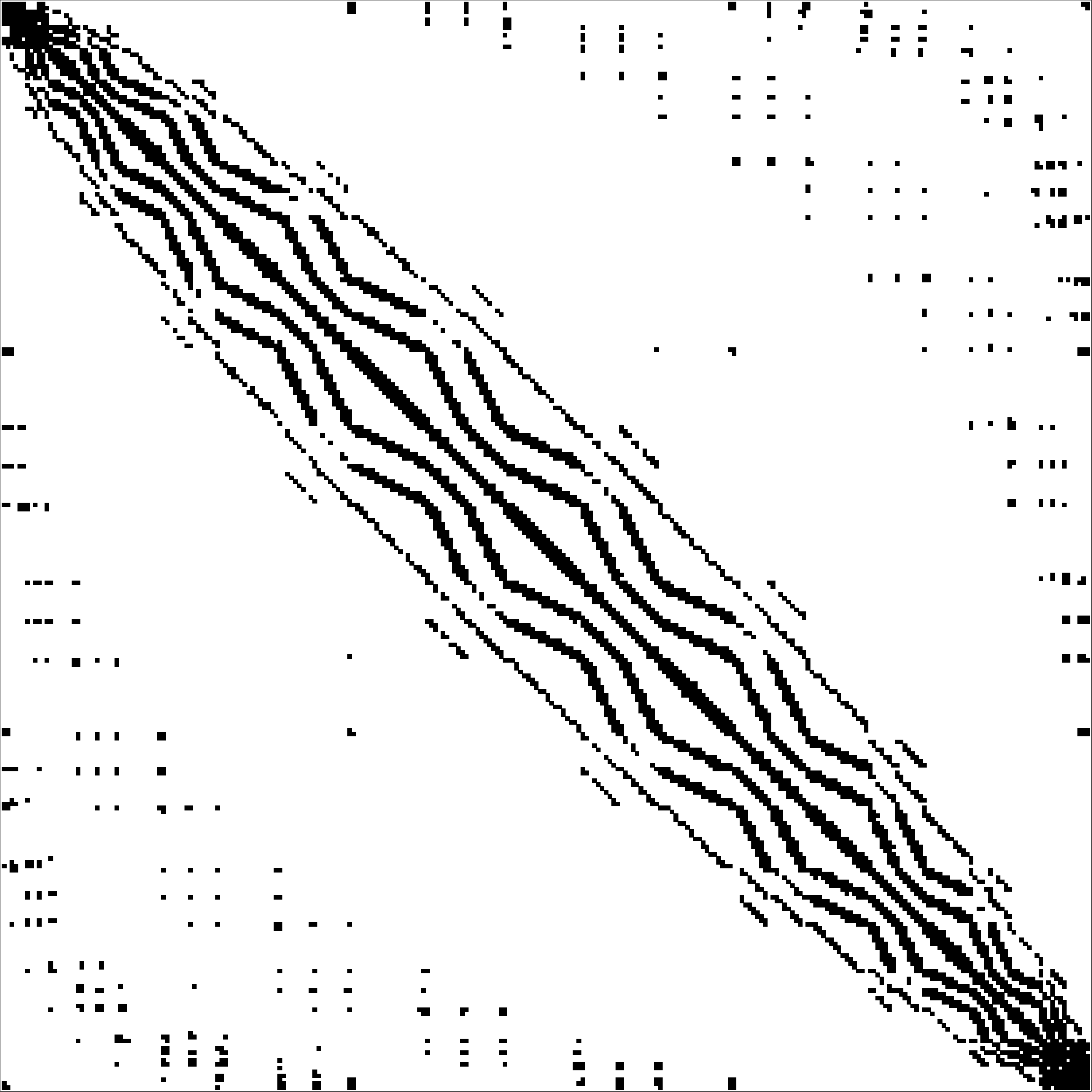}&%
\includegraphics[width=0.173\linewidth]{./008_nz_VCNT1512000.pdf}\\
APF4686&AUNW9180&CPPE32346&NCCS430080&VCNT1512000
\end{tabular}
\caption{Sparsity structures of matrix data}
\label{fig:sparse}
\end{figure}
\par
\subsection{Implementation details}\label{sec:numerical:comp}%
As dense eigensolvers, we used the LAPACK \cite{lug1999} and ScaLAPACK \cite{slug1997} routines.
Specifically, the LAPACK \texttt{dsygvd} routine was used to solve APF4686 and AUNW9180.
In \texttt{dsygvd}, a generalized eigenvalue problem is transformed to a standard eigenvalue problem of a tridiagonal matrix, and then eigenpairs of the tridiagonal matrix are computed by the divide and conquer method \cite{cuppen1981divide,gu1995divide,tisseur1999parallel}.
To date, there is no single ScaLAPACK routine to perform the same task as \texttt{dsygvd} in parallel.
Therefore, to solve CPPE32346 and NCCS430080, the ScaLAPACK \texttt{pdpotrf}, \texttt{pdsygst}, \texttt{pdsytrd}, \texttt{pdstedc}, and \texttt{pdormtr} routines were utilized through EigenKernel (\url{https://github.com/eigenkernel/}) \cite{imachi2016hybrid}.
Note that the results for VCNT1512000 are not provided in the present paper because the problem size prevents it from being solved by a dense eigensolver in practical time.
\par
In the three-stage algorithm (Algorithm \ref{alg:frame}), $\nu_{\sigma}(A,B)$ and solution of linear systems in the Lanczos and SI Lanczos methods were computed based on $LDL^{\mathrm{H}}$ factorization by the MUMPS sparse direct linear solver \cite{amestoy2001fully,amestoy2006hybrid} with the METIS fill-reducing ordering \cite{karypis1998fast}.
The bisection narrowed down the initial interval until the number of eigenvalues in the interval became less than or equal to $m_{\mathrm{max}} = 20.$
Tolerances for the relative residual and difference 2-norms in the SI Lanczos method were set to $\tau_{\mathrm{res}} = 10^{-10}$ and $\tau_{\mathrm{diff}} = 10^{-10},$ respectively.
All codes were written in Fortran 90, and the numerical experiments were performed in double-precision.
\par
\subsection{Results}\label{sec:numerical:result}%
In this subsection, we first compare the $k$-th eigenpair computed by Algorithm \ref{alg:frame} with that obtained by the dense eigensolvers described in Section \ref{sec:numerical:comp} and report the computation time with some details about the computational environment and implementation.
Then, in Sections \ref{sec:numerical:overview:initial} to \ref{sec:numerical:overview:eigenpair}, we present the detailed results of each algorithm (i.e., Algorithms \ref{alg:bisect:modified}, \ref{alg:initial}, and \ref{alg:validate}) of the three-stage algorithm.
\par
Table \ref{tab:kthev} compares the $k$-th eigenpair of the three-stage algorithm $(\hat{\lambda}_{k},\hat{\boldsymbol{x}}_{k})$ and that of the dense eigensolvers $(\lambda_{k}^{\mathrm{(d)}},\boldsymbol{x}_{k}^{\mathrm{(d)}}).$
As can be seen, at least 15 digits were the same for $\hat{\lambda}_{k}$ and $\lambda_{k}^{\mathrm{(d)}}.$
The last column shows the relative error $2$-norm, where $\hat{\boldsymbol{x}}_{k}$ and $\boldsymbol{x}_{k}^{\mathrm{(d)}}$ were normalized to satisfy $\| \hat{\boldsymbol{x}}_{k} \|_{2} = \| \boldsymbol{x}_{k}^{\mathrm{(d)}} \|_{2}.$
The error norm had an order of magnitude less than $-10,$ indicating that the $k$-th eigenvector of the three-stage algorithm agrees well with that of the dense eigensolvers.
\par
\begin{table}[htbp]
\centering
\caption{$k$-th eigenpair}\label{tab:kthev}
\begin{tabular}[c]{l|r|r|r|c|c}
\hline
\multicolumn{1}{c|}{Data}	&	\multicolumn{1}{c|}{$k$}	&	\multicolumn{1}{c|}{$\hat{\lambda}_{k}$}	&	\multicolumn{1}{c|}{$\lambda_{k}^{\mathrm{(d)}}$}	&	$\frac{| \hat{\lambda}_{k} - \lambda_{k}^{\mathrm{(d)}} |}{| \lambda_{k}^{\mathrm{(d)}} |}$ & $\frac{\| \hat{\boldsymbol{x}}_{k} - \boldsymbol{x}_{k}^{\mathrm{(d)}} \|_{2}}{\| \boldsymbol{x}_{k}^{\mathrm{(d)}} \|_{2}}$\\
\hline
APF4686	&	2343	&	$-0.4258775547956963$	&	$-0.4258775547956963$	&	$0$	& $4\times10^{-14}$	\\
AUNW9180	&	5610	&	$0.1305388835941175$	&	$0.1305388835941177$	&	$2\times10^{-15}$	 & $1\times10^{-12}$	\\
CPPE32346	&	16173	&	$-0.4332412034185730$	&	$-0.4332412034185731$	&	$2\times10^{-16}$	 & $7\times10^{-14}$	\\
NCCS430080	&	215040	&	$-0.3689638375042860$	&	$-0.3689638375042869$	&	$2\times10^{-15}$	 & $4\times10^{-11}$	\\
VCNT1512000	&	336000	&	$-0.5517499297808635$	&	\multicolumn{1}{c|}{n/a}	&	n/a	& n/a\\
\hline
\end{tabular}
\end{table}
\par
Table \ref{tab:comp} shows the total computation time and computational resources consumed by Algorithm \ref{alg:frame} (Alg.~4), its variant (Ger.), and the dense eigensolvers (Dense).
In the variant, line \ref{alg:frame:initial} of Algorithm \ref{alg:frame} was changed to set an interval including the entire spectrum based on the Gershgorin circle theorem{\interfootnotelinepenalty=10000\footnote{%
Instead of some Gershgorin-type theorem, an inclusion set of the spectrum of $B^{-1} A$ was computed based on the original theorem because the diagonal dominance of $A$ and $B$ (described in Section \ref{sec:app:eff}) does not hold for all matrix data.
To compute the inclusion set, columns of $B^{-1} A$ were obtained by solving linear systems with MUMPS, and then Gershgorin disks were calculated from the columns.
In the VCNT1512000 case, the linear systems were solved in single-precision to reduce the time to solution.%
}.}
Here, \#Core is the number of cores used in the experiments, and superscripts \textsuperscript{(w)} and \textsuperscript{(K)} represent a workstation and the K computer, respectively.
Memory indicates peak memory usage.
Actual measurement of the memory usage was performed using the GNU \texttt{time} command.
Estimation (in italics) shows the memory required to store $4n^{2}$ double-precision numbers, which is based on the memory requirement of the LAPACK \texttt{dsygvd} routine.
\par
\begin{table}[htbp]
\centering
\caption{Computation time and computational resources consumed by Algorithm \ref{alg:frame}, its variant, and dense eigensolvers}
\label{tab:comp}
\begin{threeparttable}[b]
\begin{tabular}[c]{l|r|r|r|r|r|r|r|r|r}
\hline
\multicolumn{1}{c|}{\multirow{2}{*}{Data}} & \multicolumn{3}{c|}{Time~(s)} & \multicolumn{3}{c|}{\#Core} & \multicolumn{3}{c}{Memory~(MB)}\\
\cline{2-10}
& \multicolumn{1}{c|}{Alg.~4} & \multicolumn{1}{c|}{Ger.} & \multicolumn{1}{c|}{Dense} & \multicolumn{1}{c|}{Alg.~4} & \multicolumn{1}{c|}{Ger.} & \multicolumn{1}{c|}{Dense}	& \multicolumn{1}{c|}{Alg.~4} & \multicolumn{1}{c|}{Ger.} & \multicolumn{1}{c}{Dense}\\
\hline
APF4686	& 0.3 & 0.8 & 82.1 & \multicolumn{1}{c|}{\multirow{5}{*}{1\tnote{(w)}$\phantom{00}$}} & \multicolumn{1}{c|}{\multirow{5}{*}{1\tnote{(w)}$\phantom{00}$}} & 1\tnote{(w)}$\phantom{00}$ & 13 & 12 & 629\\
AUNW9180& 19.4 & 69.0 & 655.2 & & & 1\tnote{(w)}$\phantom{00}$ & 267 & 245 &2836\\
CPPE32346& 4.4 & 194.0 & 1366.8 & & & 32\tnote{(K)}$\phantom{00}$ & 121 & 116
 & \textit{33480}\\
NCCS430080	& 2024 & 109597 & 10586& & & 180000\tnote{(K)}$\phantom{00}$ & 5069 & 5055 & \textit{5919002}\\
VCNT1512000& 5132 & 602525 & n/a& & & n/a & 38242 & 31618 & n/a\\
\hline
\end{tabular}
\begin{tablenotes}
\item[(w)] workstation with Xeon E5-2690 (2.90 GHz)
\item[(K)] K computer with SPARC64 VIIIfx (2.00 GHz) and Tofu interconnect
\end{tablenotes}
\end{threeparttable}
\end{table}
\par
Details about the computation time and implementation of the three-stage algorithm are shown in Figure \ref{fig:time}, where the total time is scaled to one.
As described in the figure legend, the three-stage algorithm consists of Algorithms \ref{alg:bisect:modified}--\ref{alg:validate}, and our implementation can be divided into the following seven major computational tasks.
(\rom{1}) $B$ is preprocessed in a symbolic manner to produce a fill-reducing ordering and an elimination tree for its $LDL^{\mathrm{H}}$ factorization.
The ordering is recycled for the $LDL^{\mathrm{H}}$ factorization of shifted matrices $A -\sigma B$ because matrices $A$ and $B$ have the same sparsity structure in our numerical experiments.
(\rom{2}) Based on the symbolic factorization, the numerical factorization of $B$ is computed to solve linear systems in the Lanczos method.
(\rom{3}) Ritz values are computed to set an initial interval.
(\rom{4}--\rom{6}) Numerical factorization of shifted matrices is computed to set an initial interval, bisect the interval, and solve the linear systems in the SI Lanczos method.
(\rom{7}) The $k$-th eigenpair is computed.
\par
As can be seen in Figure \ref{fig:time}, Algorithm \ref{alg:bisect:modified} dominates computation time as the problem size increases.
This is because, as the problem size increases, more bisection iterations are expected to be required to narrow down an initial interval in order to make the number of eigenvalues in the interval less than or equal to $m_{\mathrm{max}}=20,$ which is the same value regardless of the problem size.
\par
\begin{figure}[htbp]
\centering
\includegraphics[width=1.0\linewidth]{./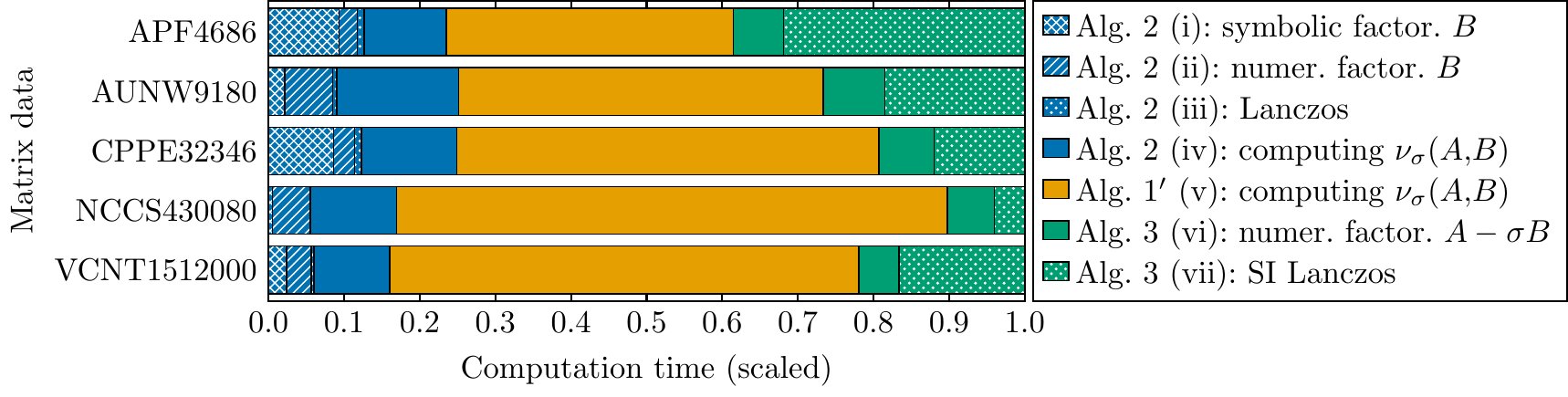}%
\caption{Computation time of algorithms and computational tasks}
\label{fig:time}
\end{figure}
\par
Figure \ref{fig:scaling} shows the relationship between total computation time and the number of non-zero elements in factors $L$ and $D$ of $LDL^{\mathrm{H}}$ factorization, denoted \#nzf, in log--log scale.
Here, \#nzf is the average of the factorizations of $B$ and shifted matrices $A-\sigma B$ with varying $\sigma.$
The dotted line in the figure is of slope one, which corresponds to the linear scaling $O(\mathrm{\#nzf}).$
As can be seen, computation time is proportional to \#nzf (the slope for linear least squares fitting of the data points is 1.06).
This is because the most time-consuming tasks in the three-stage algorithm, i.e., computation of $\nu_{\sigma}(A,B)$ and solving linear systems in the Lanczos and SI Lanczos methods, are performed based on factorizations by a sparse direct linear solver.
Generally, the estimation of \#nzf can be obtained in the symbolic factorization stage, which can be utilized to predict total computation time.
\par
\begin{figure}[htbp]
\centering
\includegraphics[width=0.40\linewidth]{./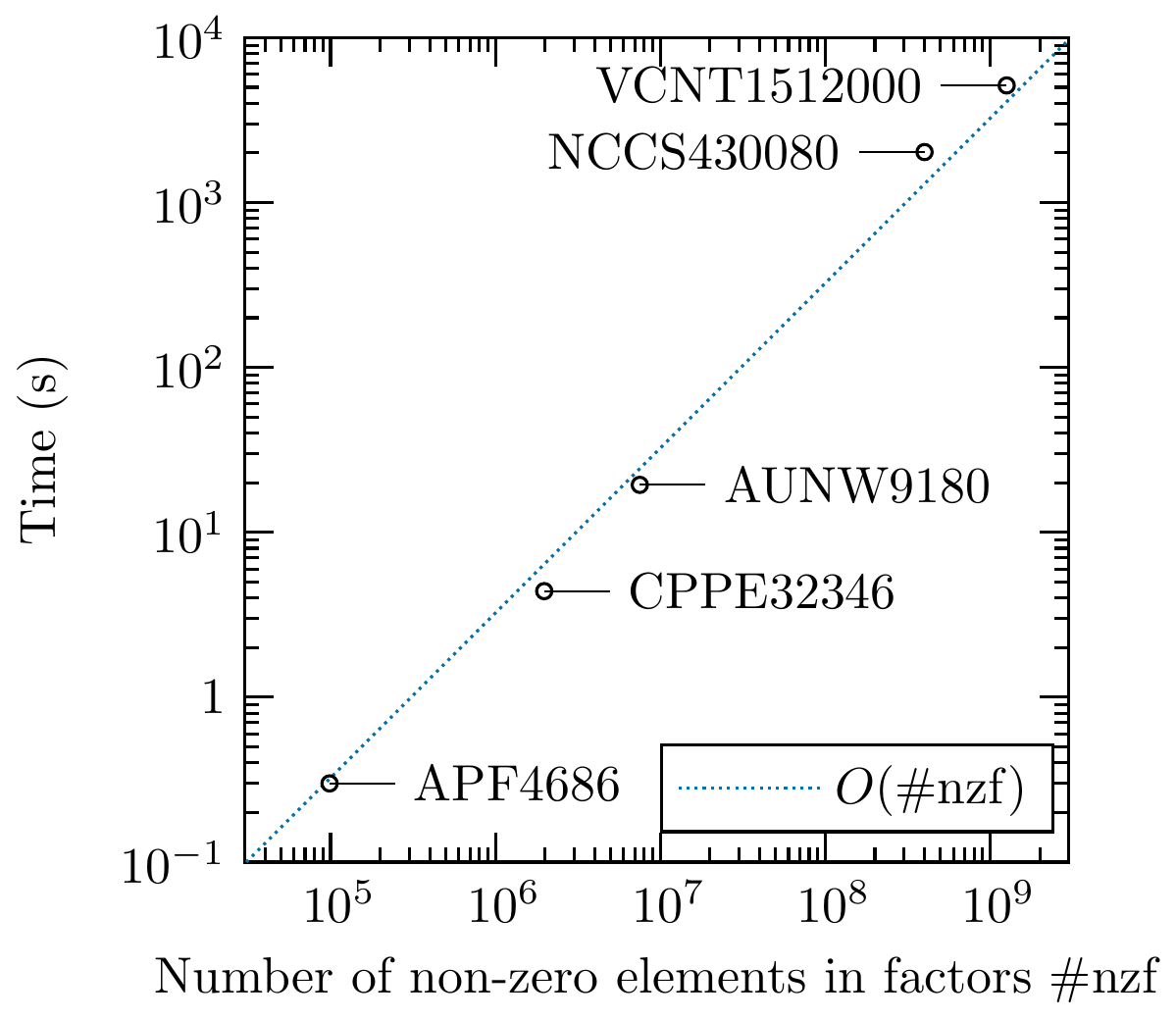}%
\caption{Computation time vs. number of non-zero elements in factors $L$ and $D$ of $LDL^{\mathrm{H}}$ factorization}
\label{fig:scaling}
\end{figure}
\par
\subsubsection{Initial interval}\label{sec:numerical:overview:initial}%
Table \ref{tab:initial} shows the initial interval $[\sigma_{\mathrm{lower}},\sigma_{\mathrm{upper}})$ obtained by Algorithm \ref{alg:initial}.
The fourth column shows the length $\sigma_{\mathrm{upper}} - \sigma_{\mathrm{lower}}$ of the interval, which contains $m$ eigenvalues with their index ranging from $i_{\mathrm{lower}}$ to $i_{\mathrm{upper}}.$
The seventh column shows the average gap between the eigenvalues in the interval defined as $\mathrm{Length}/m.$
In the last column, we compare the length of the initial interval with that of $[\lambda_{1},\lambda_{n}]$ in Table \ref{tab:matrix} by calculating the ratio of $(\lambda_{n}-\lambda_{1})/\mathrm{Length}.$
The initial intervals were 3.2 to 29.5 times narrower than $[\lambda_{1},\lambda_{n}],$ i.e., the tightest interval that can be obtained from some Gershgorin-type theorem in general.
Note that all problems required two iterations of Algorithm \ref{alg:initial} (thus, two $LDL^{\mathrm{H}}$ factorizations) to set the interval, which is the minimum required iterations to obtain an interval validated as containing $\lambda_{k}.$
\par
\begin{table}[htbp]
\centering
\caption{Initial interval}
\label{tab:initial}
\begin{tabular}[c]{l|r|r@{,}l|c|r@{,}l|r|c|c}
\hline
\multicolumn{1}{c|}{Data}	&	\multicolumn{1}{c|}{$k$}	&	\multicolumn{2}{c|}{$[\sigma_{\mathrm{lower}},\sigma_{\mathrm{upper}})$}			&	Length	&	\multicolumn{2}{c|}{$[i_{\mathrm{lower}},i_{\mathrm{upper}}]$}			&	\multicolumn{1}{c|}{$m$}	&	Gap	& Ratio \\
\hline
APF4686	&	2343	&	$[-0.777$	&	$\phantom{-{}}0.475)$	&	1.252 	&	[751	&	3458]	&	2708	&	$5\times10^{-4}$	& $\phantom{2}$5.4 \\
AUNW9180	&	5610	&	$[-0.079$	&	$\phantom{-{}}0.186)$	&	0.265 	&	[877	&	5853]	&	4977	&	$5\times10^{-5}$	& $\phantom{2}$4.1 \\
CPPE32346	&	16173	&	$[-0.731$	&	$\phantom{-{}}1.023)$	&	1.754 	&	[5586	&	26409]	&	20824	&	$8\times10^{-5}$	& $\phantom{2}$5.2 \\
NCCS430080	&	215040	&	$[-0.777$	&	$-0.275)$	&	0.502 	&	[64252	&	224635]	&	160384	&	$3\times10^{-6}$	& 29.5 \\
VCNT1512000	&	336000	&	$[-0.920$	&	$-0.429)$	&	0.491 	&	[84320	&	422420]	&	338101	&	$1\times10^{-6}$	& $\phantom{2}$3.2 \\
\hline
\end{tabular}
\end{table}
\par
\subsubsection{Bisection}\label{sec:numerical:overview:bisect}%
The initial interval in Table \ref{tab:initial} was narrowed down to the interval $[\sigma_{\mathrm{lower}},\sigma_{\mathrm{upper}})$ in Table \ref{tab:intervalbi} based on Algorithm \ref{alg:bisect:modified}.
Figure \ref{fig:bisection} shows the number of eigenvalues in the interval after each bisection iteration in log scale.
The horizontal dotted line in the figure indicates the stopping criterion $m_{\mathrm{max}}=20$ for the bisection.
In most cases, the number of eigenvalues was approximately halved after each iteration.
However, the number remained unchanged after the fifth iteration of APF4686 and the sixth iteration of CPPE32346.
In addition, there was a sharp decrease in the number of eigenvalues at the final iteration of CPPE32346, in which the number decreased by more than an order of magnitude.
This convergence behavior implies that eigenvalues are distributed in a highly non-uniform manner and that there are clusters of eigenvalues or large gaps between eigenvalues.
Indeed, in the CPPE32346 case, Gap in Table \ref{tab:intervalbi} is approximately 40 times greater than that shown in Table \ref{tab:initial}.
\par
\begin{table}[htbp]
\centering
\caption{Interval narrowed down by bisection}
\label{tab:intervalbi}
\begin{tabular}[c]{l|r|r@{,}l|c|r@{,}l|r|c}
\hline
\multicolumn{1}{c|}{Data}	&	\multicolumn{1}{c|}{$k$}	&	\multicolumn{2}{c|}{$[\sigma_{\mathrm{lower}},\sigma_{\mathrm{upper}})$}			&	Length	&	\multicolumn{2}{c|}{$[i_{\mathrm{lower}},i_{\mathrm{upper}}]$}			&	\multicolumn{1}{c|}{$m$}	&	Gap	\\
\hline
APF4686	&	2343	&	$[-0.44450$	&	$-0.42494)$	&	0.01956 	&	[2334	&	2343]	&	10	&	$2\times10^{-3}$	\\
AUNW9180	&	5610	&	$[\phantom{-{}}0.12826$	&	$\phantom{-{}}0.13240)$	&	0.00414 	&	[5601	&	5615]	&	15	&	$3\times10^{-4}$	\\
CPPE32346	&	16173	&	$[-0.43657$	&	$-0.42971)$	&	0.00685 	&	[16172	&	16173]	&	2	&	$3\times10^{-3}$	\\
NCCS430080	&	215040	&	$[-0.36897$	&	$-0.36884)$	&	0.00013 	&	[215040	&	215049]	&	10	&	$1\times10^{-5}$	\\
VCNT1512000	&	336000	&	$[-0.55178$	&	$-0.55166)$	&	0.00012 	&	[335995	&	336010]	&	16	&	$8\times10^{-6}$	\\
\hline
\end{tabular}
\end{table}
\begin{figure}[htbp]
\centering
\includegraphics[width=1.0\linewidth]{./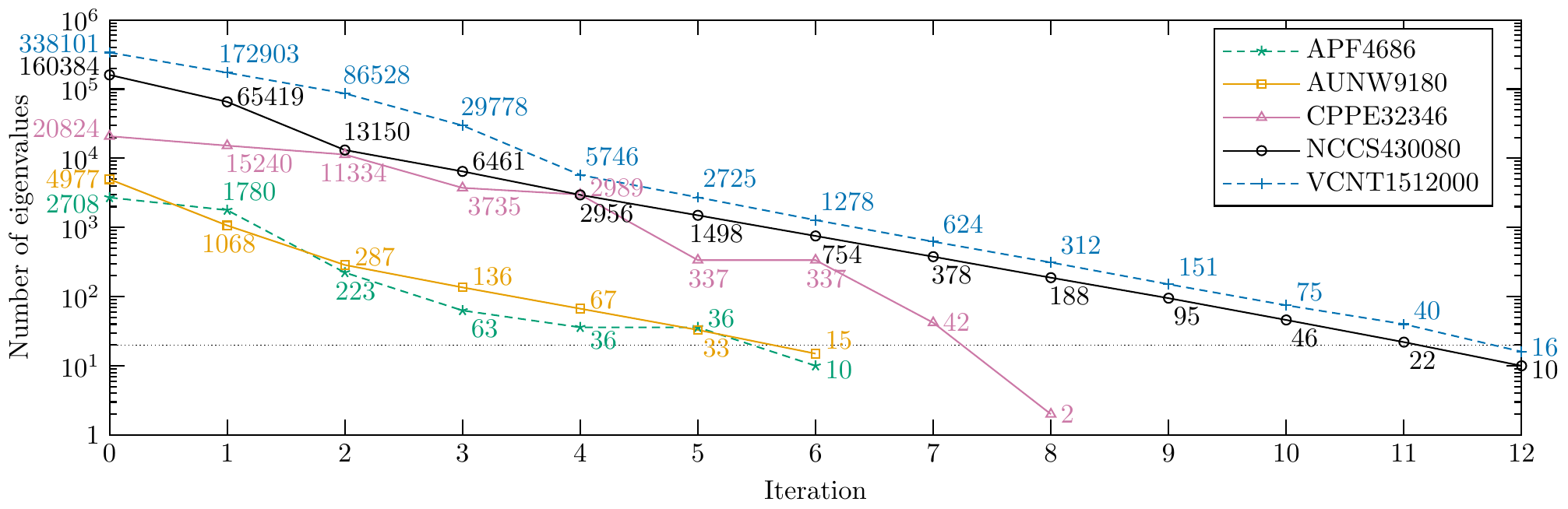}%
\caption{Number of eigenvalues in an interval after each bisection iteration}
\label{fig:bisection}
\end{figure}
\par
Here, we note that the straightforward bisection\footnote{In the straightforward bisection, line \ref{alg:bisect:modified:criterion} of Algorithm \ref{alg:bisect:modified} was changed to use the length of the interval as a stopping criterion.
Specifically, the stopping criterion was set to $(\sigma_{\mathrm{upper}}-\sigma_{\mathrm{lower}})/\max\{|\sigma_{\mathrm{lower}}|,|\sigma_{\mathrm{upper}}|\} < 10^{-14}.$
The $k$-th eigenvalue was computed using only bisection and from the equation $\hat{\lambda}_{k} = (\sigma_{\mathrm{lower}}+\sigma_{\mathrm{upper}})/2.$} required 47 to 49 iterations, which were roughly 4 to 8 times greater than those required for the three-stage algorithm shown in Figure \ref{fig:bisection} (6 to 12 iterations), to compute the $k$-th eigenvalue to the accuracy very similar to that in Table \ref{tab:kthev}%
{\interfootnotelinepenalty=10000.}
\par
\subsubsection{Computation of the $k$-th eigenpair}\label{sec:numerical:overview:eigenpair}%
In Table \ref{tab:iterationsi}, we show the iteration counts of Algorithm \ref{alg:validate} for computing $m$ eigenvalues in the interval $[\sigma_{\mathrm{lower}},\sigma_{\mathrm{upper}})$ of Table \ref{tab:intervalbi}.
The third column is the iteration count required for \eqref{eq:sigep:errbnd:cond:include} and \eqref{eq:sigep:errbnd:cond:disjoint} to be satisfied such that the index of each approximate eigenpair of the interval is validated.
The fourth and fifth columns represent the iteration counts required for the relative 2-norm of the residual and difference \eqref{eq:reldiff} of each approximate eigenpair of the interval to become less than $\tau_{\mathrm{res}} = 10^{-10}$ and $\tau_{\mathrm{diff}} = 10^{-10},$ respectively.
\par
\begin{table}[htbp]
\centering
\caption{Iteration counts of the SI Lanczos method}
\label{tab:iterationsi}
\begin{tabular}[c]{l|r|r|r|r}
\hline
\multicolumn{1}{c|}{\multirow{2}{*}{Data}} & \multicolumn{1}{c|}{\multirow{2}{*}{$m$}} & \multicolumn{3}{c}{Iteration}\\
\cline{3-5}
 & & Bound & Residual & Difference\\
\hline
APF4686 & 10 & 24 & 33 & 37\\
AUNW9180	& 15 & 26 & 42 & 48\\
CPPE32346	& 2 & 7 & 19 & 23\\
NCCS430080	 & 10 & 23 & 31 & 41\\
VCNT1512000	& 16 & 27 & 39 & 50\\
\hline
\end{tabular}
\end{table}
\par
Figure \ref{fig:error} shows the convergence history of the $k$-th eigenpair ($k=215040$) of NCCS430080.
$(\hat{\lambda}_{k}^{(j)},\hat{\boldsymbol{x}}_{k}^{(j)})$ in the figure legend denotes the $k$-th eigenpair computed at the $j$-th iteration of Algorithm \ref{alg:validate}.
$\boldsymbol{x}_{k}^{\mathrm{(d)}}$ represents the $k$-th eigenvector computed by the dense eigensolver.
As described in the legend, the figure shows the relative $2$-norm history of (\rom{1}) the residual, (\rom{2}) the difference between the $(j-1)$-th and $j$-th iterations defined in \eqref{eq:reldiff}, and (\rom{3}) the error compared with the dense eigensolver.
The figure also shows (\rom{4}) the pairwise $B$-orthogonality \eqref{eq:sigep:orthonorm} between the $k$-th eigenvector and the other ${m-1}$ eigenvectors of the interval in Table \ref{tab:intervalbi}.
Here, the three vertical dotted lines indicate the iteration counts of Bound, Residual, and Difference in Table \ref{tab:iterationsi}.
The horizontal dotted line indicates the convergence criteria $\tau_{\mathrm{res}}=10^{-10}$ and $\tau_{\mathrm{diff}}=10^{-10}$.
\par
As can be seen in Figure \ref{fig:error}, a small residual norm does not necessarily imply that eigenvector $\hat{\boldsymbol{x}}_{k}^{(j)}$ is close to convergence.
Indeed, the residual norm converged first, and convergence of the error and difference norms followed.
Since the error norm cannot be measured in general, the difference norm \eqref{eq:reldiff} is utilized in Algorithm \ref{alg:validate} to test for convergence.
\par
\begin{figure}[htbp]
\centering
\includegraphics[width=0.75\linewidth]{./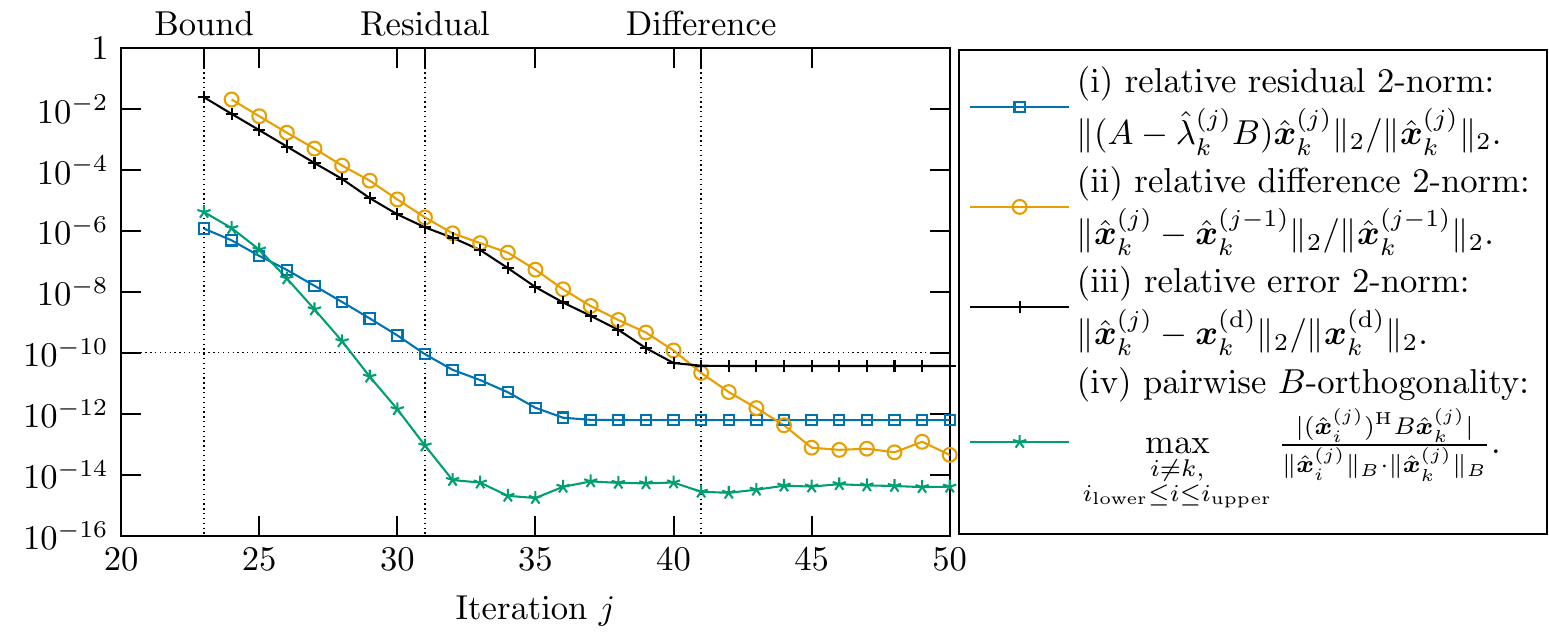}%
\caption{Convergence history of the $k$-th eigenpair ($k=215040$) of NCCS430080}
\label{fig:error}
\end{figure}
\par
\section{Concluding remarks}\label{sec:conclusion}%
The proposed three-stage algorithm obtained the validated $k$-th eigenpair $(\lambda_{k},\boldsymbol{x}_{k})$ for large sparse generalized Hermitian eigenvalue problems in electronic structure calculations with accuracy comparable with dense eigensolvers under limited computational resources.
The three-stage algorithm (Algorithm \ref{alg:frame}) consists of Algorithms \ref{alg:bisect:modified}, \ref{alg:initial}, and \ref{alg:validate}, each of which has been found to be effective for computation of the eigenpair and validation of its index.
In particular, from the numerical experiments, we have learned the following.
\begin{enumerate}
\item Algorithm \ref{alg:initial} can set a narrow interval containing $\lambda_{k}.$
The resulting intervals were 3 to 29 times narrower than $[\lambda_{1},\lambda_{n}],$ i.e., the tightest interval that can be obtained from some Gershgorin-type theorem in general.
In all experiments, only two iterations of Algorithm \ref{alg:initial} were required, i.e., the minimum required iterations to obtain an interval validated as containing $\lambda_{k}.$
\item Algorithm \ref{alg:validate} can compute the $k$-th eigenpair with high accuracy.
The eigenpairs computed by Algorithm \ref{alg:validate} agreed well with the results of dense eigensolvers, including a result obtained by a massively parallel eigenpair computation on the K computer.
Specifically, the eigenvalues were the same to at least 15 digits, and the relative error norms of the eigenvectors were less than $10^{-10}.$
\item By utilizing a sparse direct linear solver, large sparse matrices can be handled with efficiency.
For example, a nano-composite carbon solid problem of size $n=430080$ was solved in 0.6 hours using one core and 5.1 GB of memory on a workstation (a dense eigensolver required $2.9$ hours using 180000 cores and an estimated 5.9 TB of memory on the K computer).
\end{enumerate}%
Fortran codes for the three-stage algorithm are available on GitHub (\url{https://github.com/lee-djl/k-ep}).
\par
In future, we plan to examine parameter $m_{\mathrm{max}}.$
As explained in Section \ref{sec:app:over}, this parameter influences the overall performance of the three-stage algorithm because there is a trade-off between the second and third stages (Algorithms \ref{alg:bisect:modified} and \ref{alg:validate}).
In addition, we plan to compare spectral bisection with its variants.
As discussed in Section \ref{sec:bp:class:bisect}, it is possible to apply other root-finding algorithms to the second stage rather than bisection.
As long as the algorithms are derivative-free and a root is bracketed in the algorithms, they can be readily applied to the second stage and can locate $\lambda_{k}.$
Brent's method \cite{brent1971algorithm} is one such example.
Finally, we plan to modify the three-stage algorithm to deal with the presence of multiple eigenvalues or a cluster of eigenvalues.
Some possible modifications are described in Section \ref{sec:app:over}.
\par
\section*{Acknowledgments}%
The authors are very grateful to the reviewers for their careful reading of the manuscript and their valuable suggestions, which led to significant improvements in the presentation of the manuscript.
\par
This work was supported by JSPS KAKENHI (17H02828, 16KT0016, 16K17550) and by MEXT, Japan as a Priority Issue (Creation of new functional devices and high-performance materials to support next-generation industries) to be tackled using a post-K computer system.
This work used the computational resources of the K computer through the HPCI System Research Project (hp170147, hp170274), Oakforest-PACS through the JHPCN Project (jh170058-NAHI), and Sekirei provided by ISSP of the University of Tokyo.
\par
\appendix
\section*{Appendix}\label{sec:appendix}%
\def\thesubsection{\Alph{subsection}}
\numberwithin{equation}{subsection}
\subsection{Physical origin of the matrix eigenvalue problem}\label{sec:bp:phys:matrix}%
Practical electronic structure calculations use effective independent-electron theories, such as density functional theory \cite{HohenbergKohn1964,KohnSham1965,RevModPhys.71.1253}, that are formulated with an effective Schr\"odinger-type equation for electronic wave functions $\{ \psi_{i}(\boldsymbol{r}) \}_{i\ge1}$:
\begin{align*}
\mathcal{H}_{\mathrm{eff}} \psi_{i}(\boldsymbol{r}) = \varepsilon_{i} \psi_{i}(\boldsymbol{r})
\end{align*}
with the following Hamiltonian operator $\mathcal{H}_{\mathrm{eff}}$:
\begin{align*}
\mathcal{H}_{\mathrm{eff}} \equiv - \frac{\hbar^{2}}{2m_{\mathrm{e}}} \Delta + V_{\mathrm{eff}}(\boldsymbol{r}).
\end{align*}
Here, $m_{\mathrm{e}}$ is the electron mass, and $\hbar$ denotes the Planck constant.
The scalar function $V_{\mathrm{eff}}(\boldsymbol{r})$ is the potential function for electrons at the coordinate $\boldsymbol{r}$ and varies among materials.
The eigenvalues $\{ \varepsilon_{i} \}$ are real and can be indexed in increasing order.
\par
Independent-electron systems can be discretized by the Ritz variational method or the Galerkin method and can be formulated as a matrix eigenvalue problem.
When an electronic wave function $\psi_{i}(\boldsymbol{r})$ is expanded (more accurately, approximated) by the linear combination of $n$ non-orthogonal basis functions $\{ \chi_{j} (\boldsymbol{r}) \}_{j=1}^{n},$
\begin{align}\label{eq:bf}
\psi_{i}(\boldsymbol{r}) = \sum_{j=1}^{n} x_{j}^{(i)} \chi_{j}(\boldsymbol{r}),
\end{align}
generalized eigenvalue problem \eqref{eq:kep} appears with the $n \times n$ Hermitian matrices $A$ and $B$ whose $i,j$ elements are defined as follows:
\begin{align}\label{eq:mat_ele}
A_{ij} \equiv \int \chi_{i}^{\ast}(\boldsymbol{r}) \mathcal{H}_{\mathrm{eff}} \chi_{j}(\boldsymbol{r}) d\boldsymbol{r},\quad
B_{ij} \equiv \int \chi_{i}^{\ast}(\boldsymbol{r}) \chi_{j}(\boldsymbol{r}) d\boldsymbol{r}.
\end{align}
Here, the asterisk $({}^{\ast})$ denotes the complex conjugate of a function.
Matrix $B$ is positive definite from the definition, and its diagonal elements all equal one provided that the basis functions are normalized.
\par%
The size and structure of these matrices depend on the construction of the Hamiltonian operator $\mathcal{H}_{\mathrm{eff}}$ and the choice of the basis set $\{ \chi_{j} (\boldsymbol{r}) \}.$
This paper is based on a first-principle-based modeled (transferable tight-binding) theory \cite{hoshi2012order} in which basis functions, referred to as atomic orbitals, are localized in real space with their localization center being the position of an atom in a material.
The index $j$ of the basis functions $\chi_{j}$ can then be expressed as a composite of two indices $l$ and $m$ that represent a localization center, or an atom, and the shape of an orbital, respectively.
Using indices $l$ and $m,$ we obtain an alternative expression for the expansion of wave functions \eqref{eq:bf}:
\begin{align*}
\psi_{i}(\boldsymbol{r}) = \sum_{l=1}^{n_{\mathrm{atom}}} \sum_{m=1}^{n_{l}} x_{lm}^{(i)} \chi_{lm}(\boldsymbol{r}).
\end{align*}
Here, $n_{\mathrm{atom}}$ denotes the number of atoms in a material.
$n_{l}$ is the number of orbitals centered at the atom $l$ and differs depending on the given atomic species.
Therefore, the matrix size $n$ is roughly proportional to the number of atoms $n_{\mathrm{atom}}$ in a material.
Since orbitals are localized in real space, the matrix elements \eqref{eq:mat_ele} decay rapidly as the distance between the localization centers of the orbitals increases.
Thus, the matrices become sparse.
Further details about the physical origin of the problem can be found in the literature \cite{hoshi2016extremely}.
\par
\subsection{Physical background of the $k$-th eigenvalue problem}\label{sec:bp:phys:kth}%
The $k$-th eigenpair is associated with the HO state, which is in close relationship with several material properties, such as electronic transport and optical spectra \cite[Chapter 2]{Martin2004}.
The target index $k$ is a material-specific value that is uniquely determined by the number of electrons $n_{\mathrm{elec}}$ in a material.
In para-spin materials calculations, which represent a typical case, the index is defined as one-half the number of electrons, or $k \equiv \lceil n_{\rm elec}/2 \rceil.$
The difference between the $k$-th and $(k+1)$-th eigenvalues, or $\varepsilon_{k+1} - \varepsilon_{k}$, is referred to as the energy gap, which is crucial for electronic properties because the value is zero in metallic materials and non-zero in semiconducting or insulating materials.
Therefore, the $k$-th and $(k+1)$-th eigenvalues should be rigorously distinguished.
\par
One of our recent motivations to address the $k$-th eigenvalue problem is electronic transport calculations of organic device materials using a quantum wave (wave packet) dynamics method \cite{imachi2016one,hoshi2016extremely}.
In this method, an exited electronic wave (of an excited electron or a hole) is simulated by real-time dynamics with an effective time-dependent Schr\"odinger-type equation, and the wave function of the HO state or states near the HO state is set as the initial state of the wave.
\par
\bibliographystyle{unsrt}

%
\end{document}